\date{\today}
\title[Favaron's Theorem and Tuza's Conjecture]{Favaron's Theorem, $k$-dependence, and Tuza's Conjecture}
\author{Gregory J.~Puleo}
\newcommand{\lz}{\ell}
\newcommand{\ints}{\mathbb{Z}}
\newcommand{\sey}{\mathcal{S}}
\newcommand{\lob}{\mathord{<}}
\newcommand{\dl}{d^{+}_J}
\newcommand{\dout}{d^+}
\newcommand{\dlp}{d^{+}_{J'}}
\newcommand{\join}{\vee}
\newcommand{\Xo}{S}
\newcommand{\sizeof}[1]{\left\lvert{#1}\right\rvert}
\newcommand{\st}{\colon\,}
\newcommand{\apk}{\alpha'_k}
\newtheorem{proposition}{Proposition}[section]
\newtheorem{conjecture}[proposition]{Conjecture}
\newtheorem{theorem}[proposition]{Theorem}
\newtheorem{lemma}[proposition]{Lemma}
\newtheorem{observation}[proposition]{Observation}
\newtheorem{corollary}[proposition]{Corollary}
\theoremstyle{definition}
\newtheorem{definition}[proposition]{Definition}
\theoremstyle{remark}
\newtheorem{remark}[proposition]{Remark}
\newcommand{\ind}{\alpha}
\begin{document}
\maketitle
\begin{abstract}
  A vertex set $D$ in a graph $G$ is \emph{$k$-dependent} if $G[D]$
  has maximum degree at most $k-1$, and \emph{$k$-dominating} if every
  vertex outside $D$ has at least $k$ neighbors in $D$. Favaron proved
  that if $D$ is a $k$-dependent set maximizing the quantity
  $k\sizeof{D} - \sizeof{E(G[D])}$, then $D$ is $k$-dominating.  We
  extend this result, showing that such sets satisfy a stronger
  structural property, and we find a surprising connection between
  Favaron's theorem and a conjecture of Tuza regarding packing
  and covering of triangles.
\end{abstract}
\section{Introduction}
A vertex set $D$ in a graph $G$ is \emph{independent} if the induced
subgraph $G[D]$ has no edges. A vertex set is \emph{dominating} if
every vertex of $G$ either lies in the set, or has a neighbor in the
set. Ore~\cite{Ore} observed that any maximal independent set is also
a dominating set: by the maximality of the independent set, every
vertex outside the set must have a neighbor in the set. Thus,
$\gamma(G) \leq \ind(G)$ for any graph $G$, where $\gamma(G)$ is the
size of a smallest dominating set and $\ind(G)$ is the size of a
largest independent set. Fink and Jacobson~\cite{FinkJacobson1,
  FinkJacobson2} generalized the notions of independence and
domination as follows.
\begin{definition}
  For positive integers $k$, a vertex set $D \subset V(G)$ is
  \emph{$k$-dependent} if the induced subgraph $G[D]$ has maximum
  degree at most $k-1$. A vertex set $D$ is \emph{$k$-dominating} if
  $\sizeof{N(v) \cap D} \geq k$ for all $v \in V(G) - D$.
\end{definition}
Fink and Jacobson posed the following question: letting $\gamma_k(G)$
denote the size of a smallest $k$-dominating set in $G$ and letting
$\ind_k(G)$ denote the size of a largest $k$-dependent set in $G$,
is it true that $\gamma_k(G) \leq \ind_k(G)$ for all $k$? Setting
$k=1$ yields the original inequality $\gamma(G) \leq \ind(G)$. However,
for $k>1$ it is no longer true that every maximal $k$-dependent set
is $k$-dominating. Favaron~\cite{Favaron} answered the question of
Fink and Jacobson, using a different notion of ``optimality''
for $k$-dependent sets.
\begin{theorem}[Favaron~\cite{Favaron}]\label{thm:favaron}
  If $D$ is a $k$-dependent set maximizing the quantity $k\sizeof{D} -
  \sizeof{E(G[D])}$ (over all $k$-dependent sets), then $D$ is a
  $k$-dominating set.
\end{theorem}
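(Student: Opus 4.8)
The plan is to argue by contradiction using a single local exchange. Suppose $D$ is a $k$-dependent set maximizing $f(D) := k\sizeof{D} - \sizeof{E(G[D])}$, yet $D$ is not $k$-dominating; then some vertex $v \in V(G) - D$ has $a := \sizeof{N(v) \cap D} \le k-1$. I will build from $D$ and $v$ a $k$-dependent set $D'$ with $f(D') > f(D)$, contradicting the choice of $D$.

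The natural first move is to put $v$ into $D$: this increases $f$ by exactly $k - a \ge 1$. The only thing that can go wrong is that $D \cup \{v\}$ need not be $k$-dependent, since a neighbor $w$ of $v$ in $D$ that is \emph{saturated} (meaning $d_{G[D]}(w) = k-1$) acquires degree $k$ in the new induced subgraph. Let $B_0 := \{\, w \in N(v) \cap D \st d_{G[D]}(w) = k-1 \,\}$ collect these saturated neighbors. To restore $k$-dependence I will also delete a subset $B \subseteq B_0$, setting $D' := (D - B) \cup \{v\}$; the crucial choice is to take $B$ to be a \emph{maximal independent subset} of $B_0$, equivalently an independent dominating set of $G[B_0]$.

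Two things must then be checked. First, $D'$ is $k$-dependent: the vertex $v$ retains only (some of) its $\le a \le k-1$ old neighbors inside $D'$; a saturated neighbor $w$ of $v$ with $w \notin B$ has, by maximality of $B$, a neighbor in $B$, so deleting $B$ lowers its degree by at least one, offsetting the new edge to $v$; and any other vertex of $D - B$ gains at most the single neighbor $v$ while starting from degree at most $k-1$ (at most $k-2$ if it is an unsaturated neighbor of $v$). Second, counting the change in $f$ directly gives
\[
  f(D') - f(D) = (k - a) - \sum_{w \in B}\bigl(k-1-d_{G[D]}(w)\bigr) - \sizeof{E(G[B])}.
\]
Because $B \subseteq B_0$, every summand $k-1-d_{G[D]}(w)$ vanishes, and because $B$ is independent, $\sizeof{E(G[B])} = 0$; hence $f(D') - f(D) = k - a \ge 1 > 0$, the desired contradiction.

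I expect the only genuine obstacle to be the choice of $B$. Deleting \emph{all} of $B_0$ — the obvious repair — can actually decrease $f$ when $B_0$ induces many edges, so one has to observe that a maximal independent subset of $B_0$ already de-saturates every vertex of $B_0$ (by domination) while contributing nothing to the edge bookkeeping (by independence). Once that point is isolated, the degree verification and the value computation are routine.
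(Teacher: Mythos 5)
Your proof is correct. Let me verify the key computation. With $B \subseteq N(v) \cap D$ arbitrary and $D' = (D - B) \cup \{v\}$, the number of edges of $G[D]$ incident to $B$ is $\sum_{w \in B} d_{G[D]}(w) - \sizeof{E(G[B])}$, and the number of new edges from $v$ into $D - B$ is $a - \sizeof{B}$, which gives exactly your displayed formula for $f(D') - f(D)$. Taking $B$ to be a maximal independent subset of $B_0$ makes both correction terms vanish, yielding $k - a \ge 1$, and your degree check confirms $D'$ is $k$-dependent (the domination property of $B$ in $G[B_0]$ offsets the new edge to $v$ at every remaining saturated neighbor). So the argument is sound.

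This is a genuinely different route from the paper, which does not prove Favaron's theorem directly at all: it cites it and then reobtains it as an immediate consequence of the stronger Theorem~\ref{thm:main}. That proof assumes the required $k$-edge-chromatic subgraph fails to exist, invokes a generalized Lebensold/Hall criterion to extract a violating subset $S \subseteq X$, and then performs a \emph{global} exchange $D' = B \cup S$ where $B$ is the set of vertices of $D$ with fewer than $k$ neighbors in $S$. Your proof is a \emph{local} exchange centered at a single undominated vertex, with the cleverness concentrated in the choice of $B$ as a maximal independent subset of the saturated neighbors of $v$, so that $\sum_{w \in B}(k-1-d_{G[D]}(w))$ and $\sizeof{E(G[B])}$ both vanish. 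The trade-off is clear: your argument is elementary and self-contained (no Hall-type machinery), but it delivers only $k$-domination, whereas the paper's Lebensold-based exchange delivers the much stronger structural conclusion of Theorem~\ref{thm:main} (a $k$-edge-chromatic subgraph of the cross-bipartite graph compensating for any orientation's outdegrees). For the statement as posed, your proof is the more direct and arguably more illuminating one; the paper's route is what you want if you also need the refined structure.
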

Since any set of at most $k$ vertices is a $k$-dependent set, it follows
that every graph has a set of vertices which is both $k$-dependent and
$k$-dominating, which yields $\gamma_k(G) \leq \ind_k(G)$.
Theorem~\ref{thm:favaron} motivates the following definition.
\begin{definition}
  For any set $D$, define $\phi_k(D) = k\sizeof{D} -
  \sizeof{E(G[D])}$. A \emph{$k$-optimal set} is a $k$-dependent set
  maximizing $\phi_k$ over all $k$-dependent sets.
\end{definition}
The notation $\phi_k(D)$ is borrowed from the survey paper
\cite{DomSurvey}. Notice that the maximum value of $\phi_k$ over all
$k$-dependent sets is equal to the maximum value of $\phi_k$ over all
vertex sets, since if $D$ is an arbitrary vertex set and $v \in D$ has
degree exceeding $k-1$ in $G[D]$, then $\phi_k(D-v) \geq
\phi_k(D)$. We extend Theorem~\ref{thm:favaron} by proving that
$k$-optimal sets satisfy a property stronger than $k$-domination.
\begin{theorem}\label{thm:main}
  Let $D$ be a $k$-optimal set in a graph $G$, let $X = V(G)-D$, and
  let $H$ be the maximal bipartite subgraph of $G$ with partite sets
  $D$ and $X$. If $J$ is any orientation of $G[X]$, then $H$ has a
  $k$-edge-chromatic subgraph $M$ such that $d_M(v) + \dl(v) \geq k$
  for all $v \in X$, where $\dl(v)$ is the outdegree of $v$ in the
  orientation $J$.
\end{theorem}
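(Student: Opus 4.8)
The plan is to restate the conclusion as the existence of a degree‑constrained subgraph of $H$ and then verify the corresponding Hall‑type condition using the extremality of $D$. Since $H$ is bipartite, König's edge‑coloring theorem gives $\chi'(H') = \Delta(H')$ for every $H' \subseteq H$, so a subgraph $M \subseteq H$ is $k$‑edge‑chromatic if and only if $\Delta(M) \le k$. Hence it suffices to produce $M \subseteq H$ with $d_M(w) \le k$ for every vertex $w$ and $d_M(v) \ge k - \dl(v)$ for every $v \in X$. Put $b(v) = \max\{0,\, k - \dl(v)\}$, so $0 \le b(v) \le k$. I claim it is enough to find $M \subseteq H$ with $d_M(u) \le k$ for all $u \in D$ and $d_M(v) \ge b(v)$ for all $v \in X$: choosing such an $M$ with the fewest edges forces $d_M(v) = b(v)$ for every $v \in X$ (otherwise an edge at an over‑served $X$‑vertex could be deleted), so $\Delta(M) \le k$ and $d_M(v) + \dl(v) = \max\{k,\dl(v)\} \ge k$.

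Next I would invoke the defect form of Hall's theorem --- equivalently, max‑flow--min‑cut applied to the network with a source feeding each $v \in X$ along an arc of capacity $k$, unit‑capacity arcs along $E(H)$, and each $u \in D$ feeding a sink along an arc of capacity $k$ --- which tells us that such an $M$ exists precisely when
\begin{equation*}
\sum_{v \in S} b(v) \;\le\; k\sizeof{T} + e_H(S, D \setminus T) \qquad \text{for all } S \subseteq X,\ T \subseteq D,
\end{equation*}
where $e_H(S, D\setminus T)$ is the number of edges between $S$ and $D \setminus T$ (in $H$, equivalently in $G$, since $H$ is the maximal bipartite subgraph). Replacing $S$ by $\{v \in S : \dl(v) < k\}$ reduces this to the family of inequalities
\begin{equation*}
k\sizeof{S} - \sum_{v \in S} \dl(v) \;\le\; k\sizeof{T} + e_H(S, D \setminus T) \qquad \text{for all } S \subseteq X,\ T \subseteq D.
\end{equation*}

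This last inequality is where $k$‑optimality enters. Fix $S \subseteq X$ and $T \subseteq D$ and set $D' = (D \setminus T) \cup S$. As observed just after the definition of $\phi_k$, $D$ maximizes $\phi_k$ over \emph{all} vertex sets, whence $\phi_k(D') \le \phi_k(D)$. Here $\sizeof{D'} - \sizeof{D} = \sizeof{S} - \sizeof{T}$, and partitioning $E(G[D'])$ into the edges inside $S$, the edges inside $D\setminus T$, and the edges joining $S$ to $D\setminus T$ gives $\sizeof{E(G[D'])} - \sizeof{E(G[D])} = \sizeof{E(G[S])} + e_H(S, D\setminus T) - e_D(T)$, where $e_D(T)$ counts the edges of $G[D]$ meeting $T$. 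Feeding these into $\phi_k(D') \le \phi_k(D)$ and rearranging yields $k\sizeof{S} - \sizeof{E(G[S])} \le k\sizeof{T} - e_D(T) + e_H(S, D\setminus T)$; since $e_D(T) \ge 0$ and $\sizeof{E(G[S])} = \sizeof{E(J[S])} \le \sum_{v\in S}\dl(v)$ (as $J$ orients $G[X] \supseteq G[S]$), the desired inequality follows. I expect the genuine difficulty to lie in the middle step: stating the degree‑constrained subgraph criterion precisely and either locating it in the literature or deriving it cleanly from max‑flow--min‑cut, taking care that the upper bound $k$ is enforced on both sides of the bipartition. Everything around the swap $D \mapsto (D\setminus T)\cup S$ is routine bookkeeping once that swap is chosen.
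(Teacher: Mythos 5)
Your proof is correct, and its skeleton matches the paper's: reduce the existence of the required $k$-edge-chromatic subgraph (equivalently, by K\"onig's theorem, a subgraph of $H$ with $\Delta \le k$ and $d_M(v)\ge b(v)$ on $X$) to a Hall/cut-type feasibility criterion, then show a violating pair produces a set $D'$ with $\phi_k(D')>\phi_k(D)$. The difference is the route to the criterion. The paper first proves a generalization of Lebensold's theorem (Lemma~\ref{lem:genleb}), a one-set condition $\sum_{v\in D}\min\{k,\sizeof{N(v)\cap\Xo}\}\ge\sum_{v_i\in\Xo}d_i$; your two-set condition over $(S,T)$ is equivalent, since for fixed $S$ the optimal $T$ is $\{u\in D:\sizeof{N(u)\cap S}>k\}$, which turns the right side into exactly $\sum_u\min\{k,\sizeof{N(u)\cap S}\}$. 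In the paper that optimal $T$ is the set $A$ appearing in $D'=B\cup\Xo$; you keep $T$ general and simply discard the nonnegative term $e_D(T)$ at the end, which is the same arithmetic. Deriving the criterion directly from max-flow--min-cut is a bit more self-contained (no detour through Lebensold/Fulkerson), at the cost of redoing a known counting. One small wording nit: to force $d_M(v)\ge b(v)$, the source arcs into $X$ should carry lower bound $b(v)$ (or capacity exactly $b(v)$, with the requirement that the max flow equal $\sum_v b(v)$), not capacity $k$ as you wrote; the cut inequality you display is nonetheless the correct one, and the rest of the argument, including the bound $\sizeof{E(J[S])}\le\sum_{v\in S}\dl(v)$, checks out.
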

In particular, since we can take any vertex $v \in V(G)-D$ to have
outdegree $0$ in $J$, Theorem~\ref{thm:main} implies that any $k$-optimal
set is $k$-dominating. In fact, we have the following stronger
corollary, obtained by taking the vertices of an independent set to have
outdegree $0$ in $J$.
\begin{corollary}
  Let $D$ be a $k$-optimal set in a graph $G$. For any independent set $S$
  disjoint from $D$, there are $k$ disjoint matchings of $S$ into $D$, each
  saturating $S$.
\end{corollary}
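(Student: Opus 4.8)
The plan is to deduce the corollary from Theorem~\ref{thm:main} by applying it to a carefully chosen orientation $J$ of $G[X]$. Given the independent set $S \subseteq X = V(G)-D$, I would first build an orientation $J$ of $G[X]$ in which every vertex of $S$ is a sink, i.e.\ has outdegree $0$. This is exactly where independence of $S$ is used: since $S$ spans no edges, every edge of $G[X]$ incident to $S$ has its other endpoint in $X-S$, so we may orient each such edge toward its endpoint in $S$ and orient the remaining edges of $G[X]$ (all of which lie inside $X-S$) arbitrarily. In the resulting orientation $J$ we have $\dl(v)=0$ for every $v\in S$.

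Next I would invoke Theorem~\ref{thm:main} for this orientation, obtaining a $k$-edge-chromatic subgraph $M$ of $H$ with $d_M(v)+\dl(v)\ge k$ for all $v\in X$; restricting attention to $S$ gives $d_M(v)\ge k$ for every $v\in S$. Since $M$ is $k$-edge-chromatic, a proper $k$-edge-coloring partitions $E(M)$ into matchings $M_1,\dots,M_k$ (some possibly empty). As the coloring is proper and uses only $k$ colors, every vertex of $M$ has degree at most $k$, so for $v\in S$ in fact $d_M(v)=k$ and each of the $k$ colors occurs exactly once at $v$; hence every $M_i$ saturates $S$. Because $M\subseteq H$ and $H$ is bipartite with parts $D$ and $X$, each edge of $M_i$ incident to $S$ joins a vertex of $S$ to a vertex of $D$, so discarding from $M_i$ the edges not incident to $S$ leaves a matching of $S$ into $D$ that saturates $S$. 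The $k$ matchings so produced are pairwise edge-disjoint, since the color classes are, which is the desired conclusion.

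The argument is short once Theorem~\ref{thm:main} is available, and I do not expect a genuine obstacle: the only delicate points are recognizing that independence of $S$ is precisely what lets the vertices of $S$ be sinks in $J$, and the one-line observation that in a properly $k$-edge-colored graph a vertex of degree at least $k$ meets every color. All of the real difficulty lies in the proof of Theorem~\ref{thm:main} itself.
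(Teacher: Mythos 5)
Your proposal is correct and is exactly the approach the paper indicates: the paper's remark preceding the corollary says it is ``obtained by taking the vertices of an independent set to have outdegree $0$ in $J$,'' which is precisely the orientation you construct before applying Theorem~\ref{thm:main}. Your write-up simply fills in the routine details (that $d_M(v)=k$ forces all $k$ colors to appear at $v\in S$, and that edges of $H$ incident to $S$ land in $D$), which the paper leaves implicit.
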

We now turn to a conjecture of Tuza concerning packing and covering of triangles.
Given a graph $G$, let $\tau(G)$ denote the minimum size of an edge set $Y$
such that $G-Y$ is triangle-free, and let $\nu(G)$ denote the maximum size of
a set of pairwise edge-disjoint triangles in $G$. It is easy to show that
$\nu(G) \leq \tau(G) \leq 3\nu(G)$: if $\sey$ is a largest set of pairwise
edge-disjoint triangles, then to make $G$ triangle-free we must delete at least
one edge from each triangle of $\sey$, and on the other hand deleting all edges
contained in triangles of $\sey$ will always make $G$ triangle-free. Tuza
conjectured a stronger upper bound.
\begin{conjecture}[Tuza's Conjecture~\cite{TuzaProc,Tuza}]
  $\tau(G) \leq 2\nu(G)$ for all graphs $G$.
\end{conjecture}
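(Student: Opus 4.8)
The plan is to attack Tuza's conjecture by combining the local structure supplied by Theorem~\ref{thm:main} with a global discharging argument; I should say at once that this is a long-standing open problem, so what follows is the route by which the machinery of this paper bears on it, together with a frank assessment of where the route stalls. The first step is a reformulation that makes the factor of $2$ free. Fix a maximum packing $\sey$ of pairwise edge-disjoint triangles, so $\nu(G) = \sizeof{\sey}$, and call an edge \emph{heavy} if it lies in a triangle of $\sey$. If, for each triangle of $\sey$, we commit in advance to deleting two of its three edges, then the resulting set $Y$ has $\sizeof{Y} \leq 2\nu(G)$ automatically, and the entire conjecture reduces to the combinatorial question: can the ``which edge do we keep'' choices be made so that $G - Y$ is triangle-free? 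By maximality of $\sey$, every triangle of $G$ not in $\sey$ meets some triangle of $\sey$ in an edge, so a surviving triangle can only arise through a conflict among kept edges; the task is to choose a kept edge in each packing triangle so as to resolve every such conflict at once.

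The second step is to feed the structural results of this paper into that choice problem inside a minimal counterexample $G$. Choosing the measure of minimality so that $G$ has no low-degree vertex and every edge of $G$ lies in many triangles, one hopes to pick $k$ in terms of the triangle density, take a $k$-optimal set $D$ with $X = V(G) - D$, and apply Theorem~\ref{thm:main} to an orientation $J$ of $G[X]$ adapted to $\sey$. The $k$-edge-chromatic subgraph $M$ of $H$ then decomposes into $k$ matchings from $X$ into $D$, and the inequality $d_M(v) + \dl(v) \geq k$ gives, at every vertex of $X$, a budget of edge-disjoint structure that can be split between ``packing'' use and ``cover'' use. Running a discharging argument over $V(G)$, assigning each vertex an initial charge reflecting its triangle degree and redistributing along the matchings of $M$ and the arcs of $J$, one would try to show that the charge either forces $\sizeof{\sey}$ to be larger than assumed or forces a cover of size at most $2\sizeof{\sey}$, contradicting minimality. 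Favaron's optimization principle is what guarantees the charge never leaks out of $D$ in an uncontrolled way: $\phi_k$-maximality means $D$ cannot be locally improved, which is the exact statement needed to keep the discharging balanced near $D$.

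The main obstacle, and the reason the conjecture remains open, is that all of this machinery is intrinsically \emph{local}: Theorem~\ref{thm:main}, like Favaron's theorem, constrains neighborhoods and degrees, whereas $\nu$ and $\tau$ are global optima. The extremal configurations $K_4$ and $K_5$ (and their edge-disjoint unions) satisfy $\tau = 2\nu$ with no slack whatsoever, so any discharging scheme must survive these with every inequality tight, and for dense graphs built by gluing such pieces the vertex charges simply run out before the cover is paid for. Consequently the argument above will, in its honest form, prove $\tau(G) \leq 2\nu(G)$ only under a hypothesis that bounds the local density enough to close the discharging, and extending it to \emph{all} graphs would require a genuinely new, global understanding of how the tight examples can be amalgamated — something the present $k$-dependence tools, powerful as they are for the local step, do not by themselves provide.
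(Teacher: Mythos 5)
Tuza's Conjecture is stated in the paper as a conjecture, not a theorem: the paper does not prove it, and it remains open. You correctly recognize this, and your write-up is really an honest non-proof — a sketch of a possible attack followed by an admission that it stalls. That candor is appropriate. However, there is a concrete flaw in step 1 of your sketch, before you even reach the part you acknowledge is incomplete. You reformulate the problem as: fix a maximum packing $\sey$, delete two edges from each triangle of $\sey$, and ask whether the choices can be made so that $G - Y$ is triangle-free. This is a sufficient condition for the conjecture, but it is not always achievable, and it fails already on the canonical tight example. Take $G = K_4$ with vertices $a,b,c,d$. Then $\nu(K_4) = 1$, and the packing is a single triangle, say $abc$. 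Deleting any two of $\{ab,ac,bc\}$ leaves intact the triangle on the remaining three vertices (for instance, deleting $ac,bc$ leaves $abd$), so $G - Y$ is never triangle-free. The optimal cover of $K_4$ is a perfect matching of size $2$, which is not a subset of any one packing triangle. So the ``combinatorial question'' you pose has a negative answer in general, and any workable reduction must allow the cover to draw edges from outside the packing triangles or to reallocate the deletion budget across triangles.

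The second step — discharging guided by Theorem~\ref{thm:main} — is, as you say yourself, speculative, and the obstruction you name (the locality of Favaron-type conclusions against the global nature of $\nu$ and $\tau$) is real. For context, the paper's own engagement with Tuza's Conjecture does not run through discharging at all and does not attack the general case: it restricts attention to graphs of the form $I_k \join H$ with $H$ triangle-free, translates $\nu$ and $\tau$ of the join into $\alpha'_k(H)$ and $k\sizeof{V(H)} - \phi_k(H)$ via Theorem~\ref{thm:tuzaconnection}, and then reduces this special case to Conjecture~\ref{coj:sec1deg}, which is verified only for chordal $H$. None of this constitutes a proof of the conjecture as stated, and you should not present your sketch as if it were expected to compile into one.
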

Tuza's Conjecture is sharp, if true; as observed by Tuza~\cite{Tuza},
equality in the upper bound is acheieved by any graph whose blocks are
all isomorphic to $K_4$, among other examples. The best general upper
bound on $\tau(G)$ in terms of $\nu(G)$ is due to
Haxell~\cite{Haxell}, who showed that $\tau(G) \leq 2.87\nu(G)$ for
all graphs $G$. Tuza's Conjecture has been studied by many 
authors, who proved the conjecture for special classes of graph
\cite{SashaK4,Krivelevich,Puleo,LBT,LBT-perfect} or studied various
fractional relaxations of the conjecture
\cite{CDMMS,HaxellRodl,SashaStability,Krivelevich}.

A major theme of the author's previous work on Tuza's
Conjecture~\cite{Puleo} is to reduce questions about triangle packings to questions about
matchings, since matchings are very well understood. In this paper,
we therefore study the conjecture on graphs of the form $I_k \join H$,
where $I_k$ is an independent set of size $k$, $H$ is a triangle-free graph, and the
\emph{join} $G_1 \join G_2$ of two graphs $G_1$ and $G_2$ is obtained
from the disjoint union of $G_1$ and $G_2$ by adding all possible
edges between $V(G_1)$ and $V(G_2)$. Each triangle of $I_k \join H$
consists of an edge in $H$ together with a vertex of $I$; thus,
triangle packings in $I_k \join H$ correspond to partial
$k$-edge-colorings of $H$. The connection between Tuza's~Conjecture
and Favaron's~Theorem is given by the following result.
\begin{theorem}\label{thm:tuzaconnection}
  For a graph $G$ and $k \in \ints^+$, let $\phi_k(G)$ denote the
  maximum value of $\phi_k(D)$ over all $k$-dependent sets $D \subset
  V(G)$, and let $\alpha'_k(G)$ denote the largest number of edges in
  a $k$-edge-colorable subgraph of $G$. If $H$ is triangle-free, then
  \begin{align*}
    \nu(I_k \join H) &= \alpha'_k(H),\text{ and} \\
    \tau(I_k \join H) &= k\sizeof{V(H)} - \phi_k(H).
  \end{align*}
\end{theorem}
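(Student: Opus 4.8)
The plan is to reduce both quantities to well-understood parameters of $H$ by exploiting the rigid structure of triangles in $I_k \join H$. Since $I_k$ is independent, no triangle uses two vertices of $I_k$; since $H$ is triangle-free, no triangle lies inside $H$. Hence every triangle of $I_k \join H$ has the form $\{i\}\cup e$ for a unique vertex $i\in I_k$ and a unique edge $e\in E(H)$, and triangles correspond bijectively to pairs $(i,e)\in I_k\times E(H)$. I would phrase everything in terms of this decomposition.

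For the first identity I would set up a correspondence between edge-disjoint triangle families in $I_k\join H$ and $k$-edge-colorable subgraphs of $H$. Given a family $\sey$ of pairwise edge-disjoint triangles, put $M_i = \{e\in E(H) : \{i\}\cup e\in\sey\}$ for each $i\in I_k$. Two triangles $\{i\}\cup e$ and $\{i\}\cup e'$ sharing an $H$-vertex would share an edge incident to $i$, so each $M_i$ is a matching; and $\{i\}\cup e$, $\{j\}\cup e$ with $i\neq j$ would share $e$, so the $M_i$ are pairwise edge-disjoint. Thus $M_1\cup\dots\cup M_k$ is a subgraph of $H$ properly edge-colored by $I_k$, with $\sum_i\sizeof{M_i} = \sizeof{\sey}$ edges, giving $\nu(I_k\join H)\leq\apk(H)$. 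Conversely, a proper edge coloring of a subgraph $F\subseteq H$ with color classes $M_1,\dots,M_k$ (some possibly empty) yields the pairwise edge-disjoint family $\{\,\{i\}\cup e : e\in M_i,\ i\in I_k\,\}$ of size $\sizeof{E(F)}$, so $\nu(I_k\join H)\geq\apk(H)$, and the first identity follows.

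For the second identity I would describe an arbitrary edge set $Y$ with $(I_k\join H)-Y$ triangle-free by its \emph{trace}: for each $v\in V(H)$ set $A_v = \{i\in I_k : iv\notin Y\}$. The triangle $\{i\}\cup uv$ survives in $(I_k\join H)-Y$ precisely when $uv\notin Y$ and $i\in A_u\cap A_v$, so $Y$ is a transversal of the triangles iff $Y\cap E(H)$ contains every edge $uv$ with $A_u\cap A_v\neq\emptyset$. Writing $B(A)$ for this edge set and $f(A) = \sum_{v\in V(H)}\sizeof{A_v} - \sizeof{B(A)}$, the smallest transversal with trace $A$ deletes the $k-\sizeof{A_v}$ edges between $v$ and $I_k$ for each $v$, together with the edges of $B(A)$, and nothing else, so it has $k\sizeof{V(H)} - f(A)$ edges; minimizing over all $Y$ gives $\tau(I_k\join H) = k\sizeof{V(H)} - \max_A f(A)$, with $A$ ranging over all assignments $v\mapsto A_v\subseteq I_k$. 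One inequality is then immediate: if $D\subseteq V(H)$ attains $\phi_k(H)$ (recall this maximum is the same over $k$-dependent sets as over all sets), the trace $A_v = I_k$ for $v\in D$ and $A_v = \emptyset$ otherwise has $B(A) = E(H[D])$ and $f(A) = k\sizeof{D}-\sizeof{E(H[D])} = \phi_k(H)$, so $\max_A f(A)\geq\phi_k(H)$.

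The reverse inequality, bounding $f(A)$ for an arbitrary trace, is where I expect the real work, since a ``non-canonical'' trace may force a single edge of $H$ into the transversal via several colors at once, so $f(A)$ need not split cleanly over colors. To handle this I would decompose $A$ by colors: for $c\in I_k$ let $D_c = \{v\in V(H) : c\in A_v\}$, so $\sum_v\sizeof{A_v} = \sum_c\sizeof{D_c}$ and $B(A) = \bigcup_c E(H[D_c])$. Since each $E(H[D_c])$ is contained in $B(A)$, summing over the $k$ colors gives $\sum_c\sizeof{E(H[D_c])}\leq k\sizeof{B(A)}$, and therefore
\[
  \sum_{c\in I_k}\bigl(k\sizeof{D_c} - \sizeof{E(H[D_c])}\bigr)\;\geq\;k\sum_{c\in I_k}\sizeof{D_c} - k\sizeof{B(A)}\;=\;k\,f(A).
\]
Averaging over the $k$ colors, some $D_c$ satisfies $k\sizeof{D_c}-\sizeof{E(H[D_c])}\geq f(A)$, and since the left side is at most $\phi_k(H)$ we get $f(A)\leq\phi_k(H)$. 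Combining the two inequalities yields $\max_A f(A) = \phi_k(H)$ and hence the second identity; the color-averaging step above, which absorbs the overcounting of edges forced into the transversal by multiple colors, is the main obstacle and the crux of the argument.
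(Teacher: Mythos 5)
Your proof is correct. The first identity ($\nu = \apk$) is essentially the paper's argument: partition the triangle packing by its $I_k$-vertex, observe each part is a matching and the parts are pairwise disjoint, and reverse. For the second identity, the upper bound $\tau(I_k\join H)\leq k\sizeof{V(H)}-\phi_k(H)$ via the ``all-or-nothing'' trace coming from a $\phi_k$-maximizer $D$ is also the same as the paper's. Where you diverge is the lower bound: the paper takes a \emph{minimum} transversal $X$, records the undeleted sets $C_v=\{w: vw\notin X\}$ for $v\in I_k$, and then \emph{normalizes} $X$ to a transversal $X_1$ in which all $C_v$ coincide (with the extremal $C_{v^*}$), checking that this does not increase $\sizeof{X}$ and then reading off the bound by counting. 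You instead bound the objective for an \emph{arbitrary} trace $A$ by an averaging argument: decomposing $B(A)=\bigcup_c E(H[D_c])$ and using $\sum_c\sizeof{E(H[D_c])}\leq k\sizeof{B(A)}$, you conclude that some color class $D_c$ already certifies $\phi_k(D_c)\geq f(A)$, hence $f(A)\leq\phi_k(H)$. The underlying insight is the same --- the bound is always witnessed by a single color --- but your averaging step replaces the paper's explicit transformation-plus-verification, and is arguably a bit cleaner (it avoids the intermediate claim that the normalized set $X_1$ is still a transversal of no greater size).
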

Thus, this special case of Tuza's Conjecture can be interpreted as requesting a
relationship between the $\phi_k$-value of a $k$-optimal set and the size of a largest
$k$-edge-colorable subgraph of a graph. We formalize this special case below.
\begin{conjecture}\label{coj:specialtuza}
  If $k \in \ints^+$ and $H$ is a triangle-free graph, then $\tau(I_k \join H) \leq 2\nu(I_k \join H)$.
  Equivalently, $\alpha'_k(H) \geq k\sizeof{V(H)} - \phi_k(H)$.
\end{conjecture}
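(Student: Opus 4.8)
The plan is to use Theorem~\ref{thm:tuzaconnection} to restate the conjecture as the purely extremal inequality $2\alpha'_k(H) \ge k\sizeof{V(H)} - \phi_k(H)$, and then to attack it through the structure of a \emph{largest} $k$-edge-colorable subgraph. Fix a maximum $k$-edge-colorable subgraph $L$ of $H$ together with a $k$-edge-coloring of it, and for each vertex write $\delta(v) = k - d_L(v) \ge 0$ for its deficiency. Since $\sum_v d_L(v) = 2\sizeof{E(L)} = 2\alpha'_k(H)$, we get $2\alpha'_k(H) = k\sizeof{V(H)} - \sum_v \delta(v)$, so the target inequality is exactly $\sum_v \delta(v) \le \phi_k(H)$. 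Let $U = \{v : \delta(v) > 0\}$ be the set of deficient vertices. Because $\phi_k(H)$ is the maximum of $\phi_k$ over \emph{all} vertex sets (as noted after Theorem~\ref{thm:favaron}), $\phi_k(H) \ge \phi_k(U) = k\sizeof{U} - \sizeof{E(H[U])}$, so it suffices to prove
\[
  \sum_{v \in U} d_L(v) \ge \sizeof{E(H[U])}.
\]
This is the inequality I would try to establish.

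The main tool is the following local fact: if $uv \in E(H)$ with $u,v \in U$ and $uv \notin E(L)$, then $d_L(u) + d_L(v) \ge k$. Indeed, $d_L(u), d_L(v) \le k-1$, so maximality of $L$ forces $L + uv$ to have no $k$-edge-coloring; in particular the fixed coloring of $L$ does not extend to $uv$, so no color is missing at both $u$ and $v$, whence the two missing-color sets are disjoint and $(k-d_L(u)) + (k-d_L(v)) \le k$. For $k = 1$ this already forces $H[U]$ edgeless, and the inequality is trivial. For $k = 2$ it says every non-$L$ edge inside $U$ joins two vertices of $L$-degree exactly $1$; using that a $2$-edge-colorable graph is a disjoint union of paths and even cycles, that $H$ is triangle-free (to exclude the length-$2$ case), and maximality of $L$, one checks that such edges form a matching whose endpoints inject into the $L$-edges meeting $U$, yielding $\sum_{v \in U} d_L(v) \ge \sizeof{E(L[U])} + \sizeof{E(H[U]) \setminus E(L)} = \sizeof{E(H[U])}$. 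Thus this route already settles Conjecture~\ref{coj:specialtuza} for $k \le 2$.

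I expect the main obstacle to be pushing the key inequality past $k \le 2$. For $k \ge 3$ the subgraph $H[U]$ can have unbounded maximum degree and many non-$L$ edges, so no purely local count can work and triangle-freeness must be used essentially: the only obstructions to $k$-edge-colorability in a triangle-free graph of maximum degree at most $k$ are ``large'' --- overfull or snark-like subgraphs on more than $2k$ vertices --- so a non-$L$ edge inside $U$ that cannot be added to $L$ must be certified by a large, globally sparse witness. Turning this heuristic into a genuine upper bound on $\sizeof{E(H[U])}$ is the crux, and I would expect it to need either the edge-colouring structure theory behind the Goldberg--Seymour theorem or a clever global rearrangement of $L$.

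Favaron's Theorem offers an alternative route to the same target. Take $D$ to be an honest $k$-optimal set and $X = V(H) - D$, so that $k\sizeof{V(H)} - \phi_k(H) = k\sizeof{X} + \sizeof{E(H[D])}$ and the goal becomes $2\alpha'_k(H) \ge k\sizeof{X} + \sizeof{E(H[D])}$. Pick any orientation $J$ of $H[X]$ and apply Theorem~\ref{thm:main} to get a $k$-edge-colorable bipartite graph $M$ between $D$ and $X$ with $d_M(v) + \dl(v) \ge k$ for all $v \in X$; then $H[D]$ together with a maximum $k$-edge-colorable subgraph of $H[X]$ is one $k$-edge-colorable subgraph of $H$ (vertex-disjoint parts, each with the right chromatic index), and $M$ is another, and comparing their total size with the target gives $2\alpha'_k(H) \ge k\sizeof{X} + \sizeof{E(H[D])} - \bigl(\sizeof{E(H[X])} - \alpha'_k(H[X])\bigr)$. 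This settles the conjecture whenever $H[X]$ is itself $k$-edge-colorable (for instance whenever $\Delta(H[X]) \le k$), reducing the general case to controlling the $k$-edge-colouring deficiency of $H[X]$ --- which appears to be the previous obstacle wearing a different hat.
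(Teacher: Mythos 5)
This statement is a \emph{conjecture}; the paper does not prove it. What the paper does supply is (a) the equivalence of the two displayed forms, which is exactly Theorem~\ref{thm:tuzaconnection}, (b) a derivation of Conjecture~\ref{coj:specialtuza} from the stronger Conjecture~\ref{coj:sec1deg} via Lemma~\ref{lem:edgetuza}, and (c) a proof of the chordal case (Theorem~\ref{thm:chordal}). So any ``proof'' here is necessarily partial, and you correctly flag that your two routes both stall. One genuinely useful thing you noticed: since Theorem~\ref{thm:tuzaconnection} gives $\tau(I_k\join H) = k\sizeof{V(H)} - \phi_k(H)$ and $\nu(I_k\join H) = \apk(H)$, the ``Equivalently'' line of the conjecture as printed omits a factor of~$2$; the correct restatement, as you have it, is $2\apk(H)\ge k\sizeof{V(H)}-\phi_k(H)$.

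Your first route (fix a maximum $k$-edge-colorable $L$, pass to the deficient vertex set $U$, reduce the target to $\sum_{v\in U}d_L(v)\ge\sizeof{E(H[U])}$ using the fact that $\phi_k(H)$ is a max over \emph{all} vertex sets) is clean and genuinely different from anything in the paper; the local disjoint-missing-colors fact is correct, the $k=1$ case is immediate, and the $k=2$ argument does go through (the swap-on-alternating-path argument forces the non-$L$ edges inside $U$ to form a matching whose endpoints inject into the $L$-edges meeting $U$), though you should spell out that path-swap step rather than say ``one checks.'' As you observe, for $k\ge3$ the approach stalls and that is the actual open difficulty. Your second route via Theorem~\ref{thm:main} is essentially a weaker cousin of the paper's Lemma~\ref{lem:edgetuza}: where you combine two separate $k$-edge-colorable subgraphs and lose $\sizeof{E(H[X])}-\apk(H[X])$, the paper instead builds a \emph{single} $k$-edge-chromatic subgraph $T$ by absorbing the matchings of $G[D]$ into the cross-matchings and carefully accounting for ``witnessed'' edges, which closes the deficit --- but only under the degree-exactly-$k$ hypothesis that is the content of Conjecture~\ref{coj:sec1deg}. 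So your second route and the paper's Lemma~\ref{lem:edgetuza} leak in exactly the same place; the paper patches it conditionally, you leave it open, but neither is a proof.
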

A similar idea appears in a paper of Chapuy, DeVos, McDonald, Mohar,
and Schiede \cite{CDMMS}.  Studying a fractional version of Tuza's
conjecture, they consider a triangle-free Ramsey graph $H$ with low
independence number (hence matching number close to $\sizeof{V(H)}/2$) and use
$I_1 \join H$ as a sharpness example for an upper bound on $\tau(G)$.

The rest of the paper is organized as follows. In Section~\ref{sec:k1}
we build some intuition by exploring the $k=1$ case of
Theorem~\ref{thm:main}.  Based on a result that holds in the $k=1$
case, we pose the following conjecture, which is natural in its own right
and which implies Conjecture~\ref{coj:specialtuza}:
\begin{conjecture}\label{coj:sec1deg}
  If $D$ is a $k$-optimal set in a graph $G$, then $G$ has a
  $k$-edge-chromatic subgraph in which every vertex of $V(G)-D$
  has degree $k$.
\end{conjecture}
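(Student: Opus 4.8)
To begin, it is worth restating the conjecture in purely matching-theoretic language. A subgraph $M$ of $G$ is $k$-edge-chromatic with $d_M(v) = k$ for every $v \in X := V(G) - D$ exactly when $E(M)$ partitions into $k$ pairwise edge-disjoint matchings, each of which saturates $X$; conversely, the union of any $k$ pairwise edge-disjoint matchings each saturating $X$ is such a subgraph. So Conjecture~\ref{coj:sec1deg} is equivalent to the assertion that $G$ contains $k$ pairwise edge-disjoint matchings, each saturating $X$. This is precisely the statement of the corollary following Theorem~\ref{thm:main}, except that the independent set $S$ is replaced by the (generally non-independent) set $X$, and the matchings are now allowed to use edges inside $X$. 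The case $k=1$ — a maximum independent set $D$ whose complement admits a saturating matching — is the result on which the conjecture is based (Section~\ref{sec:k1}), and the plan is to obtain the general statement by feeding Theorem~\ref{thm:main} into matching theory for $G[X]$ in the same spirit.

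The natural first move is to fix an orientation $J$ of $G[X]$ and apply Theorem~\ref{thm:main}, obtaining a $k$-edge-chromatic subgraph $M_H \subseteq H$ with $d_{M_H}(v) + \dl(v) \geq k$ for all $v \in X$. Since $M_H$ is $k$-edge-chromatic, $d_{M_H}(v) \leq k$, so $r(v) := k - d_{M_H}(v)$ satisfies $0 \leq r(v) \leq \dl(v)$. It then suffices to find a subgraph $F \subseteq G[X]$ with $d_F(v) = r(v)$ for every $v \in X$ such that $M_H \cup F$ is still $k$-edge-chromatic: every vertex of $X$ then has degree exactly $k$. Unwinding what this demands, one wants partitions $X = A_1 \sqcup B_1 = \cdots = A_k \sqcup B_k$, pairwise edge-disjoint matchings $N_1,\dots,N_k$ of $H$ with $N_i$ a perfect matching of $A_i$ into $D$, and pairwise edge-disjoint matchings $P_1,\dots,P_k$ with $P_i$ a perfect matching of $G[B_i]$; the colour classes of the sought subgraph are then the $N_i \cup P_i$. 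Concretely, two ingredients are needed: (i) a subgraph $F$ of $G[X]$ realizing the prescribed degree sequence $r(\cdot)$, which by Tutte's $f$-factor theorem exists unless some deficient set obstructs it; and (ii) a proper $k$-edge-colouring of $M_H \cup F$, which amounts to colouring $F$ so that at each $v$ its edges receive, bijectively, the $r(v)$ colours of $\{1,\dots,k\}$ missing at $v$ in a fixed $k$-edge-colouring of $M_H$ — a list-edge-colouring of $F$ in which every vertex is required to use up its entire list.

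I expect the coordination in (ii) to be the principal obstacle. Theorem~\ref{thm:main} governs the bipartite edges only after the $G[X]$-outdegrees are pinned down, whereas here those outdegrees must be realized by genuine coloured edges whose colours interlock with a colouring of $H$; and the usual bipartite machinery (K\"onig's theorem, Galvin's theorem) does not allow one to dictate which colours appear at a given vertex, so there is no off-the-shelf reason a compatible pair of colourings exists. There is also a circularity hazard in (i): the $k$-optimality of $D$ has already been consumed in proving Theorem~\ref{thm:main}, so a deficient-set obstruction to $F$ is not obviously dissolved merely by re-choosing $J$. A more robust route is probably to not invoke Theorem~\ref{thm:main} as a black box but to re-run the exchange argument behind Theorem~\ref{thm:favaron} for this stronger conclusion, carrying an orientation of $G[X]$ (or directly the partitions $X = A_i \sqcup B_i$ together with the matchings) as part of the extremal configuration and showing that any failure of the required matchings to exist lets one modify $D$ so as to increase $\phi_k(D)$. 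As warm-ups I would first settle the cases where $G[X]$ is bipartite, where $G[X]$ is a disjoint union of paths and even cycles, and where $G[X]$ has a perfect matching (so that a canonical orientation with every outdegree positive, and correspondingly simple choices of the $P_i$, are available); I suspect the general case will require a genuinely new idea, which is presumably why the statement is posed as a conjecture.
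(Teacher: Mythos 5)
Note first that the statement in question is Conjecture~\ref{coj:sec1deg}: the paper does not prove it. The paper establishes it only for chordal graphs (Theorem~\ref{thm:chordal}), and otherwise reduces it to the decomposition Conjecture~\ref{coj:decomp}. You correctly recognize that it is open, so your submission is a research plan rather than a proof; what can usefully be said is how your plan lines up with the paper's own partial progress.

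Your plan mirrors the paper's own thinking quite closely. Starting from Theorem~\ref{thm:main}, obtaining $M_H \subseteq H$ with $d_{M_H}(v) + d^+_J(v) \geq k$, and then trying to top up each vertex's degree with edges inside $G[X]$ is exactly the route the paper sketches in the Remark at the end of Section~\ref{sec:tuza}, and the obstruction you single out --- that the colour classes of the bipartite part and the internal part need not interlock, and that ``the usual bipartite machinery does not allow one to dictate which colours appear at a given vertex'' --- is precisely the paper's observation that ``we cannot guarantee that the `received coloring' of $M$ agrees with the received coloring of $T'$.'' Where the paper goes further is in turning this obstruction into a clean sufficient condition. Lemma~\ref{lem:satur} observes that, because the $k$-edge-colouring of $M_H$ is not under your control, the filler subgraph must accommodate \emph{every} list assignment $\ell$ on $G[X]$ with $\sizeof{\ell(v)} \leq d^+_J(v)$; this is abstracted as $d^+_J$-saturability (Definition~\ref{def:saturate}). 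The paper then proves (Lemma~\ref{lem:decomp-sat}) that a good sequential decomposition of an orientation of $G[X]$ yields $d^+_J$-saturability via Galvin's kernel method, and verifies the decomposition hypothesis for chordal graphs via a simplicial elimination order (Lemma~\ref{lem:simp}). By contrast, your suggestion to realize the degree sequence $r(v) = k - d_{M_H}(v)$ with an $f$-factor and then list-edge-colour it does not separate cleanly into these two layers: the $f$-factor would have to be chosen before the colours are known, whereas saturability defers the commitment to colours until after the list is revealed, which is what makes the Galvin machinery applicable. Your circularity worry --- that the $k$-optimality of $D$ is already spent in Theorem~\ref{thm:main} and cannot be reused to dissolve a deficient-set obstruction --- is a real concern with the $f$-factor route and is, in effect, the reason the paper shifts the burden entirely onto a structural property of $G[X]$ rather than onto a further extremal argument about $D$.

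So: your sketch is not a proof (nor does it claim to be), it takes essentially the same first step as the paper, it identifies the same central obstacle, and where it diverges (Tutte $f$-factors vs.\ $\ell$-saturability plus kernels) the paper's route is the one that actually closes the gap in the chordal case.
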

In Section~\ref{sec:leben} we prove a generalization of a theorem of
Lebensold concerning disjoint matchings in bipartite graphs. In
Section~\ref{sec:proof} we apply the results of
Section~\ref{sec:leben} to prove Theorem~\ref{thm:main}. In
Section~\ref{sec:tuza} we prove Theorem~\ref{thm:tuzaconnection} and
and show that Conjecture~\ref{coj:sec1deg} implies
Conjecture~\ref{coj:specialtuza}. In Section~\ref{sec:chordal} we
introduce the notion of \emph{saturability}, which is a variant of
list edge coloring. We formulate a conjecture about graph
decompositions and show that this conjecture implies
Conjecture~\ref{coj:sec1deg}.  The proof uses Galvin's kernel
method~\cite{Galvin}. Finally, we apply Theorem~\ref{thm:main} to
prove that Conjecture~\ref{coj:sec1deg} holds for chordal graphs.

\section{The $k=1$ Case of Theorem~\ref{thm:main}}\label{sec:k1}
When $k=1$, things are simpler: a $1$-dependent set is just an
independent set, so a $1$-optimal set is just a maximum-size
independent set. The statement of Theorem~\ref{thm:main} can also be
simplified: when $k=1$, the only vertices of $V(G)-D$ for which the
theorem says anything are those of outdegree $0$, which form an
independent set. Thus, the $k=1$ case of Theorem~\ref{thm:main} is
equivalent to the following proposition, which we prove using
Hall's~Theorem~\cite{HallsTheorem}:
\begin{proposition}\label{prop:k1}
  Let $D$ be a maximum-size independent set in a graph $G$. If $T$ is
  an independent set disjoint from $D$, then $G$ has a matching of $T$
  into $D$ that saturates $T$.
\end{proposition}
\begin{proof} 
  Assume that no such matching exists; we will obtain an independent
  set $D'$ with $\sizeof{D'} > \sizeof{D}$. If no such matching
  exists, then by Hall's Theorem, there is a set $S \subset T$ such
  that $\sizeof{N(S) \cap D} < \sizeof{S}$. Let $D' = (D - N(S)) \cup S$.
  Since $S$ and $D$ are independent and since we have deleted $N(S)$,
  the set $D'$ is also independent. Since $\sizeof{N(S) \cap D} < \sizeof{S}$,
  we have $\sizeof{D'} > \sizeof{D}$, as desired.
\end{proof}
In fact, Proposition~\ref{prop:k1} is one half of Berge's
``Maximum Stable Set Lemma''~\cite{Berge}, which states that an
independent set $D$ is maximum if and only if every independent set
$T$ disjoint from $D$ matches into $D$.

The proof of Theorem~\ref{thm:main} uses a similar strategy to the
proof of Proposition~\ref{prop:k1}: assuming that the desired
$k$-edge-chromatic subgraph does not exist, we obtain a set of
vertices for which a similar Hall-type condition fails, and we use
this set of vertices to construct a ``better'' $k$-dependent set.

Proposition~\ref{prop:k1} implies the $k=1$ case of
Conjecture~\ref{coj:sec1deg}, but we have not been able to generalize
this proof to higher $k$.
\begin{corollary}\label{cor:match}
  If $D$ is a maximum-size independent set in a graph $G$, then
  $G$ has a matching that saturates every vertex of $V(G)-D$.
\end{corollary}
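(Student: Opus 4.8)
The plan is to deduce Corollary~\ref{cor:match} from Proposition~\ref{prop:k1} by a single application of the latter, the trick being to choose the right matching to start from. Write $X = V(G) - D$. First I would take $M$ to be a maximum matching of the \emph{induced subgraph $G[X]$} --- a matching inside $G[X]$, not inside $G$ --- and let $U$ be the set of vertices of $X$ left unsaturated by $M$.

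The key observation is that $U$ is an independent set of $G$: any edge of $G$ joining two vertices of $U$ would lie in $G[X]$ and could be added to $M$, contradicting the maximality of $M$ as a matching of $G[X]$. Since $U$ is independent and disjoint from $D$, Proposition~\ref{prop:k1} applies and yields a matching $N$ of $U$ into $D$ that saturates $U$.

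It remains to glue $M$ and $N$ together. Here the point is that the vertex sets covered by $M$ and by $N$ are disjoint: every vertex covered by $M$ lies in $X \setminus U$, while every vertex covered by $N$ lies in $U \cup D$, and these two sets are disjoint because $X \cap D = \emptyset$. Hence $M \cup N$ is a matching, and it saturates $(X \setminus U) \cup U = X = V(G) - D$, which is exactly the claim.

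I expect no serious obstacle here: the only thing to get right is the choice of $M$ inside $G[X]$ rather than inside $G$, which is what makes $U$ independent while keeping the rerouting edges of $N$ from clashing with $M$. Starting instead from a maximum matching of $G$ leaves the unsaturated vertices of $X$ independent, but pushing them into $D$ via Proposition~\ref{prop:k1} can then collide with existing matching edges, so the cleaner choice above is what I would use.
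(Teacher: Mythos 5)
Your proof is correct and follows essentially the same approach as the paper: take a matching inside $G[X]$ (the paper uses a maximal one, you use a maximum one, either works), observe that the unsaturated vertices form an independent set, and match them into $D$ via Proposition~\ref{prop:k1}.
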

\begin{proof}
  Let $M_1$ be a maximal matching in $V(G)-D$, and let $S$ be
  the set of vertices in $V(G)-D$ not saturated by $M_1$. Since
  $M_1$ is a maximal matching, $S$ is an independent set. By
  Proposition~\ref{prop:k1}, there is a matching $M_2$ of $S$
  into $D$ that saturates $S$. Thus, $M_1 \cup M_2$ is a matching
  that saturates $V(G)-D$.
\end{proof}
 Since Theorem~\ref{thm:main} generalizes
Proposition~\ref{prop:k1}, one would hope that it could play an
analogous role in a proof of Conjecture~\ref{coj:sec1deg}, but it
appears difficult to find a suitable orientation to use. This difficulty
is explored further in Section~\ref{sec:tuza}.
\section{Extending Lebensold's Theorem}\label{sec:leben}
Lebensold~\cite{Lebensold} proved the following generalization of
Hall's~Theorem. As Brualdi observed in his review of \cite{Lebensold},
the theorem is equivalent to a theorem of Fulkerson~\cite{Fulkerson}
concerning disjoint permutations in $0,1$-matrices.  An alternative
proof of the theorem, using matroid theory, is due to
Murty~\cite{Murty}.
\begin{theorem}[Lebensold~\cite{Lebensold}]\label{thm:leb}
  An $X,D$-bigraph has $k$ disjoint matchings from $X$ into $D$, each saturating $X$, if and only if
  \[ \sum_{v \in D}\min\{ k, \sizeof{N(v) \cap \Xo} \} \geq k\sizeof{\Xo} \]
  for every subset $\Xo \subset X$.
\end{theorem}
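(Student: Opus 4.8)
The plan is to prove the equivalence in two parts; the forward implication is a short counting argument and the converse is where the real work lies.

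For necessity, suppose $M_1,\dots,M_k$ are pairwise edge-disjoint matchings of the bigraph, each saturating $X$, and fix $\Xo\subseteq X$. Together these matchings contain exactly $k\sizeof{\Xo}$ edges incident to $\Xo$, and every such edge joins a vertex of $\Xo$ to a vertex of $N(\Xo)\cap D$. Each $v\in D$ meets at most one edge of each $M_i$, hence at most $k$ of these edges in total; it also meets at most $\sizeof{N(v)\cap\Xo}$ of them, since all of them have their other endpoint in $\Xo$. So $v$ absorbs at most $\min\{k,\sizeof{N(v)\cap\Xo}\}$ of the $k\sizeof{\Xo}$ edges, and summing over $v\in D$ gives $\sum_{v\in D}\min\{k,\sizeof{N(v)\cap\Xo}\}\ge k\sizeof{\Xo}$.

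For sufficiency I would first reduce the problem to finding a single subgraph. By König's edge-coloring theorem, every bipartite graph of maximum degree $k$ decomposes into $k$ matchings, so it suffices to produce a subgraph $F$ of the bigraph with $d_F(x)=k$ for all $x\in X$ and $d_F(v)\le k$ for all $v\in D$: the color classes of a proper $k$-edge-coloring of $F$ are then $k$ pairwise edge-disjoint matchings, each saturating $X$ because every $x\in X$ has degree exactly $k$ in $F$. To decide whether such an $F$ exists I would form the network with a source $s$, a sink $t$, an arc $sx$ of capacity $k$ for each $x\in X$, an arc $vt$ of capacity $k$ for each $v\in D$, and an arc $xv$ of capacity $1$ for each edge $xv$ of the bigraph. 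Subgraphs $F$ of the required kind correspond exactly to integral $s$-$t$ flows of value $k\sizeof{X}$: conservation at $x$ together with $\sum_x f(sx)=k\sizeof{X}$ and $f(sx)\le k$ forces $d_F(x)=k$, while conservation at $v$ gives $d_F(v)\le k$. So by max-flow/min-cut, $F$ exists if and only if every $s$-$t$ cut has capacity at least $k\sizeof{X}$.

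It then remains to translate the min-cut condition into the stated inequality. An $s$-$t$ cut is given by a partition of the vertex set; writing $\Xo$ for the vertices of $X$ on the source side and $D_1$ for the vertices of $D$ on the source side, its capacity is $k(\sizeof{X}-\sizeof{\Xo})+k\sizeof{D_1}+e(\Xo,D-D_1)$, where $e(\Xo,D-D_1)$ counts edges of the bigraph between $\Xo$ and $D-D_1$. Demanding that this be at least $k\sizeof{X}$ for every choice and then minimizing over $D_1$ for a fixed $\Xo$ (it pays to move $v$ to the source side exactly when $\sizeof{N(v)\cap\Xo}>k$), the requirement collapses to $\sum_{v\in D}\min\{k,\sizeof{N(v)\cap\Xo}\}\ge k\sizeof{\Xo}$ for all $\Xo\subseteq X$, which is the hypothesis. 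The one step needing care is this last computation — checking that the stated optimum over $D_1$ is attained and equals $\sum_{v\in D}\min\{k,\sizeof{N(v)\cap\Xo}\}$ — but it is routine. I expect the genuine obstacle to be proving the existence of $F$ \emph{without} routing through max-flow/min-cut (equivalently, via a defect form of Hall's theorem for degree-constrained subgraphs); the network formulation above is precisely what lets us sidestep that difficulty.
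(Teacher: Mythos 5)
The paper does not prove Theorem~\ref{thm:leb} at all; it is cited directly from Lebensold, with the text also pointing to Fulkerson's equivalent result on disjoint permutations in $0,1$-matrices and to Murty's matroid-theoretic proof. So there is no internal proof to compare against. Your argument stands as an independent proof, and it is correct. The necessity direction is a clean double-count of the $k\sizeof{\Xo}$ edges of $\bigcup_i M_i$ incident to $\Xo$. For sufficiency, the reduction to finding a subgraph $F$ with $d_F(x)=k$ on $X$ and $d_F(v)\le k$ on $D$ is exactly right, and K\H{o}nig's edge-coloring theorem correctly converts such an $F$ into $k$ matchings each saturating $X$ (since every $x$ has degree $k$ in $F$, it appears in every color class). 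The network with capacities $k$ on $s$-$X$ arcs and $D$-$t$ arcs, and capacity $1$ on the bigraph edges, is the standard degree-constrained-subgraph formulation; your identification of cuts with pairs $(\Xo,D_1)$, the capacity expression $k(\sizeof{X}-\sizeof{\Xo})+k\sizeof{D_1}+e(\Xo,D-D_1)$, and the minimization over $D_1$ (put $v$ on the source side exactly when $\sizeof{N(v)\cap\Xo}>k$) all check out, and collapse precisely to the stated Lebensold condition. The one thing you flagged as needing care — that the optimum over $D_1$ is $\sum_{v\in D}\min\{k,\sizeof{N(v)\cap\Xo}\}$ — is indeed routine since the contribution of each $v$ is independently either $k$ or $\sizeof{N(v)\cap\Xo}$. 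Integrality of the max flow is also needed to get a genuine subgraph $F$, but you invoked it implicitly and it is standard.
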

We extend the theorem to find necessary and sufficient conditions for the existence
of a $k$-edge-chromatic subgraph in which the vertices of $X$ are allowed to have
different degrees.
\begin{lemma}\label{lem:genleb}
  Let $H$ be an $X,D$-bigraph, and write $X = \{v_1, \ldots,
  v_t\}$. Let $k$ be a positive integer and let $d_1, \ldots, d_t$ be
  nonnegative integers with all $d_i \leq k$. The following are
  equivalent:
  \begin{enumerate}[(1)]
  \item $H$ has a $k$-edge-chromatic subgraph $M$ such that $d_M(v_i) \geq d_i$ for all $i$;
  \item For every subset $\Xo \subset X$,
    \[ \sum_{v \in D}\min\{ k, \sizeof{N(v) \cap \Xo}\} \geq \sum_{v_i \in \Xo}d_i. \]
  \end{enumerate}
\end{lemma}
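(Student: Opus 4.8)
The plan is to prove $(1)\Rightarrow(2)$ by a direct edge count and $(2)\Rightarrow(1)$ by reducing to Theorem~\ref{thm:leb} via a gadget that converts the nonuniform demands $d_i$ into the uniform saturation demand of Lebensold's setting. Throughout I use K\"onig's edge-coloring theorem: a bipartite graph is $k$-edge-colorable if and only if it has maximum degree at most $k$. So condition (1) is equivalent to the existence of a subgraph $M\subseteq H$ with $\Delta(M)\le k$ and $d_M(v_i)\ge d_i$ for all $i$.

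For $(1)\Rightarrow(2)$, fix $\Xo\subseteq X$ and count the edges of $M$ having an endpoint in $\Xo$. Counting from the $X$-side gives $\sum_{v_i\in\Xo}d_M(v_i)\ge\sum_{v_i\in\Xo}d_i$. Counting from the $D$-side, each $w\in D$ is incident in $M$ to at most $\min\{k,\sizeof{N(w)\cap\Xo}\}$ such edges, since $d_M(w)\le k$ and $M\subseteq H$. Comparing the two counts yields the inequality in (2).

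For $(2)\Rightarrow(1)$, build an auxiliary $X,D'$-bigraph $H'$ from $H$ by adjoining, for each $i$, a set $Z_i$ of $k-d_i$ new vertices, each adjacent only to $v_i$ --- the hypothesis $d_i\le k$ is precisely what makes this well-defined --- and setting $D'=D\cup\bigcup_i Z_i$. For any $\Xo\subseteq X$, each vertex of $Z_i$ contributes $1$ to $\sum_{v\in D'}\min\{k,\sizeof{N_{H'}(v)\cap\Xo}\}$ when $v_i\in\Xo$ and $0$ otherwise, so this sum equals $\sum_{w\in D}\min\{k,\sizeof{N_H(w)\cap\Xo}\}+\sum_{v_i\in\Xo}(k-d_i)$; subtracting $k\sizeof{\Xo}=\sum_{v_i\in\Xo}k$ turns Lebensold's condition for $H'$ into exactly condition (2) for $H$. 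Hence Theorem~\ref{thm:leb} supplies $k$ pairwise edge-disjoint matchings $M_1,\dots,M_k$ of $H'$, each saturating $X$. Let $M$ be obtained from $M_1\cup\cdots\cup M_k$ by deleting every edge meeting $\bigcup_i Z_i$, so that $M\subseteq H$. As a union of $k$ matchings, $M$ has $\Delta(M)\le k$ and so is $k$-edge-colorable; and since each $M_j$ saturates $v_i$ while at most $\sizeof{Z_i}=k-d_i$ of the $k$ resulting edges at $v_i$ can land in $Z_i$, we have $d_M(v_i)\ge d_i$. Thus $M$ witnesses (1).

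I expect no genuinely hard step: the argument is short once the gadget is in hand. The main thing to watch is the bookkeeping --- choosing $\sizeof{Z_i}=k-d_i$ exactly so that Lebensold's uniform saturation requirement becomes the weighted right-hand side $\sum_{v_i\in\Xo}d_i$ --- together with the observation that $d_i\le k$ is needed both to form the gadget and, already at $\Xo=\{v_i\}$, for the two conditions to be mutually consistent. (One could instead phrase (1) as a feasible-flow problem, with a source arc of capacity $d_i$ into each $v_i$ and a sink arc of capacity $k$ out of each $w\in D$, and read (2) off the min cut; but reducing to Theorem~\ref{thm:leb} fits the paper's narrative.)
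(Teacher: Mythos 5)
Your proof is correct and uses the same gadget as the paper: for each $v_i$, adjoin $k-d_i$ new pendant vertices adjacent only to $v_i$, and apply Lebensold's theorem to the augmented $X,D'$-bigraph. The only deviation is that you prove $(1)\Rightarrow(2)$ by a direct two-sided degree count rather than routing it through the equivalences $(1)\Leftrightarrow(1')$ and $(2)\Leftrightarrow(2')$ as the paper does; this is a harmless simplification of the easy direction.
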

Theorem~\ref{thm:leb} is the special case of Lemma~\ref{lem:genleb}
obtained when all $d_i = k$.  We prove Lemma~\ref{lem:genleb} using
Theorem~\ref{thm:leb}, so Theorem~\ref{thm:leb} is
self-strengthening in this sense.
\begin{proof}
  For each $i$, let $D_i$ be a set of size $k-d_i$, with all sets
  $D_i$ disjoint from each other and disjoint from $V(H)$, and let $D'
  = D \cup D_1 \cup \cdots \cup D_t$. Let $H'$ be the $X,D'$-bigraph
  obtained from $H$ by making the vertices in $D_i$ adjacent only to
  $v_i$. Consider the following two statements:
  \begin{enumerate}[(1$'$)]
  \item $H'$ has $k$ edge-disjoint matchings, each saturating $X$;
  \item For every subset $\Xo \subset X$,
    \[ \sum_{v \in D'}\min\{ k, \sizeof{N(v) \cap \Xo}\} \geq k\sizeof{\Xo}. \]
  \end{enumerate}
  By Theorem~\ref{thm:leb}, (1$'$) is equivalent to (2$'$). We prove that
  (1) is equivalent to (1$'$) and (2) is equivalent to (2$'$).

  If $M_1, \ldots, M_k$ are edge-disjoint matchings in $H'$ each
  saturating $X$, then their restriction to $H$ yields a
  $k$-edge-chromatic subgraph $M$ of $H$ with each $d_M(v_i) \geq
  d_i$.  Conversely, any such subgraph of $H$ can be extended to $k$
  edge-disjoint matchings in $H'$. Thus, (1) is equivalent to (1$'$).

  Elements of $D_i$ each contribute $1$ to the sum in (2$'$) when $v_i \in \Xo$, and contribute
  $0$ otherwise. This yields
  \[ \sum_{v \in D'}\min\{k, \sizeof{N(v) \cap \Xo}\} = \sum_{v_i \in \Xo}(k - d_i) + \sum_{v \in D}\min\{k, \sizeof{N(v) \cap \Xo}\}, \]
  so (2) is equivalent to (2$'$).
\end{proof}
\section{Proof of Theorem~\ref{thm:main}}\label{sec:proof}
The following observation was mentioned in the introduction; we repeat
it here because of its central role in this section.
\begin{observation}\label{obs:subset}
  If $G$ is a graph and $T \subset V(G)$, then for any $k$, there is a
  $k$-dependent subset $D \subset T$ such that $\phi_k(D) \geq \phi_k(T)$.
\end{observation}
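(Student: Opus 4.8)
The plan is to argue by repeatedly deleting high-degree vertices, using induction on $\sizeof{T}$. If $T$ is already $k$-dependent, we may take $D = T$ and there is nothing to prove. Otherwise, $G[T]$ has a vertex $v$ with $d_{G[T]}(v) \geq k$, and the plan is to pass to $T - v$ and invoke the inductive hypothesis.

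First I would record the effect of deleting $v$ on $\phi_k$. Deleting $v$ removes exactly one vertex and exactly $d_{G[T]}(v)$ edges from the induced subgraph, so
\[
  \phi_k(T - v) = k(\sizeof{T}-1) - \bigl(\sizeof{E(G[T])} - d_{G[T]}(v)\bigr) = \phi_k(T) - k + d_{G[T]}(v).
\]
Since $d_{G[T]}(v) \geq k$, this gives $\phi_k(T - v) \geq \phi_k(T)$. By the inductive hypothesis applied to $T - v$ (which has fewer vertices), there is a $k$-dependent set $D \subseteq T - v \subseteq T$ with $\phi_k(D) \geq \phi_k(T - v) \geq \phi_k(T)$, completing the induction.

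There is essentially no obstacle here: the only thing to be careful about is ensuring the process terminates, which is immediate since $\sizeof{T}$ strictly decreases at each step and cannot go below $k$ (any set of at most $k$ vertices is automatically $k$-dependent), and that the inequality $\phi_k(D) \geq \phi_k(T)$ is preserved along the chain of deletions, which follows by transitivity from the single-step inequality above.
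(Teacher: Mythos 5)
Your proposal is correct and matches the paper's own (brief) argument, which appears in the introduction: remove a vertex $v$ of degree at least $k$ in $G[T]$, note that $\phi_k(T-v)\geq\phi_k(T)$, and iterate. You have simply packaged the same idea as an explicit induction on $\sizeof{T}$.
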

As a consequence of Observation~\ref{obs:subset}, in order to prove that
some vertex set $D$ is not $k$-optimal, it suffices to find \emph{any}
vertex set $D'$ with $\phi_k(D') > \phi_k(D)$, without worrying about
whether $D'$ is $k$-dependent. When $k$ is a nonnegative
integer, we write $[k]$ for the set $\{1, \ldots, k\}$.

We are now ready to prove Theorem~\ref{thm:main}.
\begin{proof}[Proof of Theorem~\ref{thm:main}]
  Let $D$ be a $k$-dependent set, let $X = V(G) - D$, and let $J$ be
  an orientation of $G[X]$. Assuming that there is no
  $k$-edge-chromatic subgraph with the desired properties, we
  construct a set $D'$ with $\phi_k(D') > \phi_k(D)$.

  Since there is no $k$-edge-chromatic subgraph with the desired
  properties, applying Lemma~\ref{lem:genleb} with $d_i = \max\{0, k -
  \dl(v)\}$ shows that there is a set $\Xo \subset X$ such that
  \[ \sum_{v \in D}\min\{ k, \sizeof{N(v) \cap \Xo}\} < \sum_{v_i \in \Xo}\max\{0, k-\dl(v)\}. \]
  We may assume that $\dl(v) \leq k$ for all $v \in \Xo$, since vertices with $\dl(v) > k$
  may be removed from $\Xo$ without causing the above inequality to fail. This gives the
  simpler inequality
  \begin{equation}
    \label{eq:lebfail}
    \sum_{v \in D}\min\{ k, \sizeof{N(v) \cap \Xo}\} < k\sizeof{\Xo} - \sum_{v_i \in \Xo}\dl(v).
  \end{equation}
Define sets $B$ and $A$ by
\begin{align*}
 B &= \{v \in D \st \sizeof{N(v) \cap \Xo} \leq k-1\}, \\
 A &= D-B, \\
\end{align*}
Let $D' = B \cup \Xo$. We claim that
$\phi_k(D') > \phi_k(D)$. 
First observe that
\[ \sum_{v \in D}\min\{ k, \sizeof{N(v) \cap \Xo}\} = k\sizeof{A} + \sum_{v \in B}\sizeof{N(v) \cap \Xo} = k\sizeof{A} + \sizeof{[\Xo, B]}, \]
where $[\Xo, B]$ is the set of all edges with one endpoint in $\Xo$ and the other endpoint in $B$.
Thus, from \eqref{eq:lebfail},
\begin{align}
  k\sizeof{A} + \sizeof{[\Xo, B]} &< k\sizeof{\Xo} - \sum_{v_i \in \Xo}\dl(v)\nonumber\\
  &\leq k\sizeof{\Xo} - \sizeof{E(G[\Xo])}.\label{eq:cat}
\end{align}
On the other hand, we have
\[
  \phi_k(D') \geq \phi_k(D) - k\sizeof{A} + k\sizeof{\Xo} - \sizeof{[\Xo, B]} - \sizeof{E(G[\Xo])}.
\]
Combining this with Inequality~\ref{eq:cat} yields $\phi_k(D') > \phi_k(D)$, as desired.
Thus, when $D$ is $k$-optimal, a $k$-edge-chromatic subgraph with the desired properties exists.
\end{proof}
\section{Tuza's Conjecture and Conjecture~\ref{coj:sec1deg}}\label{sec:tuza}
We first prove Theorem~\ref{thm:tuzaconnection}, which furnishes a connection
between Tuza's~Conjecture and $k$-optimal sets.
\begin{proof}[Proof of Theorem~\ref{thm:tuzaconnection}]
  Let $G = I_k \join H$.  We first show that $\nu(G) = \apk(H)$. Let
  $\sey$ be a maximum set of edge disjoint triangles in $G$.  For each
  $v \in I_k$, let $\sey_v = \{T \in \sey \st v \in T\}$. Since each
  triangle in $G$ consists of exactly one vertex of $I_k$ together
  with an edge in $H$, we can write $\sey$ as the disjoint union $\sey
  = \bigcup_{v \in I_k}\sey_v$. Since the triangles in $\sey_v$ are
  edge-disjoint, no two triangles in $\sey_v$ can share a common
  vertex $w \in V(H)$: if this were the case, they would intersect in
  the edge $vw$. Hence the edges of $\sey_v$ that lie in $H$ form a
  matching $M_v$ in $H$. Since the triangles in $\sey$ are
  edge-disjoint, it follows that the matchings $M_v$ are pairwise
  disjoint, so $\bigcup_{v \in I_k}M_v$ is a $k$-edge-colorable
  subgraph of $H$ having size $\nu(G)$. Therefore, $\nu(G) \leq
  \apk(H)$.

  On the other hand, if $H_0$ is a maximum $k$-edge-colorable subgraph of $H$,
  then we can write $E(H_0) = M_1 \cup \cdots \cup M_k$, where each $M_i$ is
  a matching. Let $v_1, \ldots, v_k$ be the vertices of $I_k$, and for $i \in [k]$,
  let $\sey_i = \{v_iwz \st wz \in M_i\}$. Now $\bigcup_{i \in [k]}\sey_i$ is a family
  of $\apk(H)$ pairwise edge-disjoint triangles in $G$, so $\nu(G) \geq \apk(H)$.

  Next we show that $\tau(G) = k\sizeof{V(H)} - \phi_k(H)$. Let $D$ be a $k$-optimal
  subset of $V(H)$, and define an edge set $X$ by
  \[ X = E(D) \cup \{vw \st \text{$v \in I_k$ and $w \in
    V(H)-D$}\}. \] Clearly, $\sizeof{X} = \sizeof{E(G[D])} +
  k(\sizeof{V(H)}-\sizeof{D})$, which rearranges to $\sizeof{X} =
  k\sizeof{V(H)} - \phi_k(H)$, since $D$ is $k$-optimal.  We claim that
  $G-X$ is triangle-free. Let $T$ be any triangle in $G$; we may write
  $T=uvw$, where $uw \in E(H)$ and $v \in I_k$. If $u \notin D$, then
  $vu \in X$, and likewise for $w$. On the other hand, if $u,w \in D$,
  then $uw \in X$. Hence $G-X$ is triangle-free, so $\tau(G) \leq k\sizeof{V(H)} - \phi_k(H)$.

  Conversely, let $X$ be a minimum edge set such that $G-X$ is
  triangle-free. For each $v \in I_k$, let $C_v = \{w \in V(H) \st vw
  \notin X\}$.  We transform $X$ so that all the sets $C_v$ are equal:
  pick $v^* \in I_k$ to minimize $\sizeof{C_{v^*}}$, and define $X_1$
  by
  \[ X_1 = (X \cap E(H)) \cup \{vw \st w \in C_{v^*}\}. \]
  Now $G-X_1$ is triangle-free: if $vwz$ is a triangle in $G-X_1$, then $v^*wz$
  is a triangle in $G-X$, contradicting the assumption that $G-X$ is
  triangle-free. Furthermore, by the minimality of $\sizeof{C_{v^*}}$, we
  have $\sizeof{X_1} \leq \sizeof{X}$.

  Therefore, $\sizeof{X_1} = \sizeof{X} = \tau(G)$. Let $D = V(H) - C_{v^*}$.
  Since $G-X_1$ is triangle-free, we have $E(G[D]) \subset X_1$, and so
  \[ \sizeof{X_1} \geq \sizeof{E(G[D])} + k\sizeof{V(H) - D} = k\sizeof{V(H)} - \phi_k(D). \]
  While $D$ need not be $k$-dependent, the maximum value of $\phi_k$ over subsets of $H$
  is achieved at some $k$-dependent vertex set, so $\phi_k(D) \leq \phi_k(H)$. Hence,
  \[ \tau(G) = \sizeof{X_1} \geq k\sizeof{V(H)} - \phi_k(H).\qedhere\]
\end{proof}
The following lemma shows that Conjecture~\ref{coj:sec1deg} implies Conjecture~\ref{coj:specialtuza}.
\begin{lemma}\label{lem:edgetuza}
  Let $D$ be a $k$-optimal set in a graph $G$. If $G$ has
  a $k$-edge-chromatic subgraph in which every vertex of $V(G)-D$ has
  degree $k$, then
  \[ k\sizeof{V(G)} - \phi_k(G) \leq 2\apk(G). \]
  In particular, if $G$ is triangle-free, then $\tau(I_k \join G) \leq 2\nu(I_k \join G)$.
\end{lemma}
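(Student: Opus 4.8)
The plan is to exhibit two $k$-edge-colorable subgraphs of $G$ whose numbers of edges sum to at least $k\sizeof{V(G)} - \phi_k(G)$; since each such subgraph has at most $\apk(G)$ edges, adding the two bounds gives the claim. First I would make the left-hand side concrete. Because $D$ is $k$-optimal, $\phi_k(G) = \phi_k(D) = k\sizeof{D} - \sizeof{E(G[D])}$, so writing $X = V(G) - D$ we get $k\sizeof{V(G)} - \phi_k(G) = k\sizeof{X} + \sizeof{E(G[D])}$. Thus it suffices to produce $k$-edge-colorable subgraphs $M_1$ and $M_2$ of $G$ with $\sizeof{E(M_1)} + \sizeof{E(M_2)} \geq k\sizeof{X} + \sizeof{E(G[D])}$.

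Let $M$ be the hypothesized $k$-edge-chromatic subgraph of $G$, with $d_M(v) = k$ for every $v \in X$, and partition $E(M)$ into the classes $E_{XX}$, $E_{XD}$, $E_{DD}$ according to the number of endpoints lying in $X$. Summing degrees over $X$ gives $2\sizeof{E_{XX}} + \sizeof{E_{XD}} = \sum_{v \in X}d_M(v) = k\sizeof{X}$. For $M_1$ I take $M$ itself, so $\sizeof{E(M_1)} = \sizeof{E_{XX}} + \sizeof{E_{XD}} + \sizeof{E_{DD}} \geq \sizeof{E_{XX}} + \sizeof{E_{XD}}$. For $M_2$ I take $E_{XX} \cup E(G[D])$; this subgraph is $k$-edge-colorable because $E_{XX}$ lies entirely within $X$ while $E(G[D])$ lies entirely within $D$, so $M_2$ is a union, on the disjoint vertex sets $X$ and $D$, of the subgraph $(X, E_{XX})$ of $M$ (hence $k$-edge-colorable) and the graph $G[D]$, which has maximum degree at most $k-1$ and so is $k$-edge-colorable by Vizing's theorem. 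Hence $\sizeof{E(M_2)} = \sizeof{E_{XX}} + \sizeof{E(G[D])}$, and adding,
\[ 2\apk(G) \;\geq\; \sizeof{E(M_1)} + \sizeof{E(M_2)} \;\geq\; 2\sizeof{E_{XX}} + \sizeof{E_{XD}} + \sizeof{E(G[D])} \;=\; k\sizeof{X} + \sizeof{E(G[D])}, \]
which equals $k\sizeof{V(G)} - \phi_k(G)$, as wanted. For the final assertion, when $G$ is triangle-free Theorem~\ref{thm:tuzaconnection} identifies $k\sizeof{V(G)} - \phi_k(G)$ with $\tau(I_k \join G)$ and $\apk(G)$ with $\nu(I_k \join G)$, so the displayed inequality reads $\tau(I_k \join G) \leq 2\nu(I_k \join G)$.

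I do not anticipate a real obstacle; the only genuine choice is $M_2$. The naive guess $M_2 = E_{XX} \cup E_{DD}$ yields only $k\sizeof{X} + 2\sizeof{E_{DD}}$ edges together with $M_1$, which need not reach $k\sizeof{X} + \sizeof{E(G[D])}$, and grafting all of $E(G[D])$ onto $M \setminus E_{DD}$ can push degrees in $D$ up to $2k-1$, destroying $k$-edge-colorability. The working point is that $E_{XX}$ meets no vertex of $D$, so it combines with $E(G[D])$ for free, with Vizing's theorem needed only to $k$-edge-color $G[D]$; after that, everything reduces to the degree count at $X$.
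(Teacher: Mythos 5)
Your proof is correct, and it takes a genuinely different route from the paper's. The paper constructs a \emph{single} $k$-edge-chromatic subgraph $T$ by threading together the $k$ matchings $M'_1, \ldots, M'_k$ that saturate $X = V(G) - D$ with a Vizing $k$-edge-coloring $N_1, \ldots, N_k$ of $G[D]$ (taking $M_i = M'_i$ together with those edges of $N_i$ not meeting $M'_i$), and then shows $2\sizeof{T} \geq k\sizeof{X} + \sizeof{E(G[D])}$ via a counting argument: it counts cross-edges $q$, argues that each edge of $G[D]$ missing from $T$ is ``witnessed'' by a distinct cross-edge so that $r \leq q$, and chases the degree-sum formula. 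You instead exhibit \emph{two} $k$-edge-colorable subgraphs $M_1 = M$ and $M_2 = E_{XX} \cup E(G[D])$, bound each by $\apk(G)$, and observe that the trisection $E_{XX},E_{XD},E_{DD}$ of $E(M)$ plus the identity $2\sizeof{E_{XX}} + \sizeof{E_{XD}} = k\sizeof{X}$ gives the needed inequality directly. Your version is more elementary: it sidesteps the greedy recombination and the witness-counting entirely, using only the degree-sum identity over $X$ and the fact that $E_{XX}$ and $E(G[D])$ live on disjoint vertex sets so their union inherits $k$-edge-colorability from Vizing. The one thing the paper's argument delivers that yours does not is an explicit single large $k$-edge-colorable subgraph of size at least $\frac{1}{2}\bigl(k\sizeof{X} + \sizeof{E(G[D])}\bigr)$, but since that extra information is not used anywhere, your averaging over two subgraphs is a clean simplification.
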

\begin{proof}
  Since $\Delta(G[D]) \leq k-1$, Vizing's~Theorem~\cite{Vizing} implies that $G[D]$ is $k$-edge-colorable.  Let
  $N_1, \ldots, N_k$ be the $k$ edge-disjoint matchings in a
  $k$-edge-coloring of $G[D]$, so that $N_1 \cup \cdots \cup N_k = E(G[D])$, and let $M'_1, \ldots, M'_k$ be $k$
  pairwise edge-disjoint matchings each saturating $V(G)-D$, as guaranteed by the hypothesis.
  For each $i$, let $M'_i$ be the matching defined by
  \[ M_i = M'_i \cup \{e \in N_i \st \text{$e \cap f = \emptyset$ for all $f \in M'_i$}\}. \]
  Let $T = M_1 \cup \cdots \cup M_k$. Clearly $T$ is $k$-edge-chromatic, so $\apk(G) \geq \sizeof{T}$.

  A \emph{cross-edge} is an edge of $T$ with exactly one endpoint in $D$.
  If there are $q$ cross-edges, then the degree-sum formula yields
  \[ \sum_{i=1}^k \sizeof{M'_i} = \frac{k}{2}\sizeof{V(G)-D} + \frac{q}{2}. \]
  Furthermore, if there are $r$ edges of $G[D]$ which fail to appear in $T$,
  then each of these edges is ``witnessed'' by a different cross-edge, so $r \leq q$.
  Clearly $r \leq \sizeof{E(G[D])}$, so
  \begin{align*}
    \sum_{i=1}^k \sizeof{M_i} &= \frac{k}{2}\sizeof{V(G) - D} + \frac{q}{2} + \sizeof{E(G[D])} - r \\
    &\geq \frac{k}{2}\sizeof{V(G) - D} + \sizeof{E(G[D])} - \frac{r}{2} \\
    &\geq \frac{k}{2}\sizeof{V(G) - D} + \frac{1}{2}\sizeof{E(G[D])}.
  \end{align*}  
  It follows that
  \[ 2\apk(G) \geq 2\sizeof{T} \geq k\sizeof{V(G)-D} + \sizeof{E(G[D])} = k\sizeof{V(G)} - \phi_k(G). \qedhere \]  
\end{proof}
\begin{corollary}
  Conjecture~\ref{coj:sec1deg} implies Conjecture~\ref{coj:specialtuza}.
\end{corollary}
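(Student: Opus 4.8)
The plan is to deduce the corollary directly from Lemma~\ref{lem:edgetuza}, which has already done all the substantive work. Assume Conjecture~\ref{coj:sec1deg} holds. Fix $k \in \ints^+$ and let $H$ be an arbitrary triangle-free graph. Choose a $k$-optimal set $D$ in $H$. By Conjecture~\ref{coj:sec1deg}, applied to the graph $H$, there is a $k$-edge-chromatic subgraph of $H$ in which every vertex of $V(H)-D$ has degree exactly $k$. This is precisely the hypothesis required to invoke Lemma~\ref{lem:edgetuza} with $G = H$; since $H$ is triangle-free, the ``in particular'' clause of that lemma yields $\tau(I_k \join H) \leq 2\nu(I_k \join H)$, which is exactly the statement of Conjecture~\ref{coj:specialtuza} (and, via the equivalence recorded there, the inequality $\apk(H) \geq k\sizeof{V(H)} - \phi_k(H)$).

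Because this reduction is immediate, there is no real obstacle to overcome at this stage: the content lies entirely in Lemma~\ref{lem:edgetuza}, together with Theorem~\ref{thm:tuzaconnection}, which translate between the triangle-packing language and the $\phi_k$/$\apk$ language. One minor point worth noting is that Conjecture~\ref{coj:sec1deg} is phrased for arbitrary graphs $G$, whereas Conjecture~\ref{coj:specialtuza} concerns only triangle-free $H$, so the implication in fact uses only the restriction of Conjecture~\ref{coj:sec1deg} to triangle-free graphs; I would mention this briefly, if at all, since the corollary is intended only to record that Conjecture~\ref{coj:sec1deg} is a (possibly strictly stronger) sufficient condition for this special case of Tuza's Conjecture.
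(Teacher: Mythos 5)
Your proposal is correct and follows exactly the route the paper intends: the corollary is stated without proof precisely because it is an immediate consequence of Lemma~\ref{lem:edgetuza}, whose hypothesis is the conclusion of Conjecture~\ref{coj:sec1deg} and whose ``in particular'' clause is the statement of Conjecture~\ref{coj:specialtuza}. Your added remark that only the triangle-free case of Conjecture~\ref{coj:sec1deg} is actually used is accurate and harmless, though not something the paper dwells on.
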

\begin{remark}
  The proof technique of Lemma~\ref{lem:edgetuza} \emph{almost} yields
  an inductive proof of Conjecture~\ref{coj:sec1deg}, in the following
  sense.

  Let $D$ be a $k$-optimal set of a graph $G$, let $X = V(G)-D$, and
  let $G' = G[X]$. Suppose that Conjecture~\ref{coj:sec1deg} holds for
  $G'$, let $D'$ be a $k$-optimal set in $G[X]$, and let
  $M'_1, \ldots, M'_k$ be $k$ disjoint matchings in $G'$, each saturating
  $X-D'$.

  Let $N_1,\ldots,N_k$ be the matchings of a $k$-edge-coloring of
  $G[D]$, and use $N_1,\ldots,N_k$ and $M'_1,\ldots,M'_k$ to form a
  $k$-edge-chromatic subgraph $T'$ in $G'$ by the same method used in
  the proof of Lemma~\ref{lem:edgetuza}. Now for each edge $uv$ of $G[D']$
  that fails to lie in $T'$, we have $uv \in N_i$ for some $i$,
  and without loss of generality, the endpoint $u$ is covered by $M'_i$.
  Orient $uv$ from $u$ to $v$. (If both endpoints are covered by $M'_i$,
  we may orient $uv$ either way.) Let $J$ be the resulting orientation
  of $G[D']$.

  Observe that each vertex $v \in D'$ is incident to $\dl(v)$ edges
  of the $k$-edge-chromatic graph $T'$. Since $D$ is $k$-optimal in
  $G$, it is $k$-optimal in $G[D \cup D']$. Now by
  Theorem~\ref{thm:main}, there is another $k$-edge-chromatic graph
  $M$ with $E(M) \subset [D, D']$ such that $d_M(v) + \dl(v) \geq k$
  for each $v \in D'$. Thus $M \cup T'$ is a subgraph of $G$ where all
  vertices outside $D$ have degree at least $k$; indeed, $M \cup T'$
  can be taken so that the vertices outside $D$ have degree exactly
  $k$ and the vertices of $D'$ have degree at most $k$. The problem is
  that $M \cup T'$ need not be $k$-edge-chromatic, even though $M$ and
  $T'$ are $k$-edge-chromatic. In other words, we cannot guarantee
  that the ``received coloring'' of $M$ agrees with the received
  coloring of $T'$.
\end{remark}
\section{A Graph Decomposition Conjecture}\label{sec:chordal}
In this section, we introduce a conjecture concerning graph
decomposition which implies Conjecture~\ref{coj:sec1deg} (and
therefore Conjecture~\ref{coj:specialtuza}).
\begin{definition}
  A \emph{sequential decomposition} of a digraph $J$ is a sequence $H_1, H_2, \ldots$
  of spanning subdigraphs of $J$ such that each arc of $J$ belongs to exactly one
  $H_i$.
\end{definition}
The only difference between a sequential decomposition and a decomposition in the
usual sense is that a sequential decomposition has an explicit order on the subdigraphs,
whereas a classical decomposition is an unordered collection of subdigraphs whose edges
partition $E(J)$.
\begin{conjecture}\label{coj:decomp}
  For every graph $G$, there is an orientation $J$ having no directed
  odd cycle and admitting a sequential decomposition
  $H_1, H_2, \ldots$ such that:
  \begin{itemize}
  \item Each component of $H_i$ is a directed path or directed even cycle, and
  \item For each $v \in V(J)$ and each $i$, we have $\dout_{H_i}(v) \geq \dout_{H_{i+1}}(v)$.
  \end{itemize}
\end{conjecture}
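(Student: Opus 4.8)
The plan is to prove Conjecture~\ref{coj:decomp}, so I want an orientation $J$ of $G$ with no directed odd cycle, together with a sequential decomposition $H_1, H_2, \ldots$ into ``linear forests plus even cycles'' whose outdegree sequences are pointwise non-increasing. I will try to build $J$ and the $H_i$ simultaneously by a greedy/peeling argument driven by a convenient global invariant.

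\medskip

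First I would choose the orientation. A natural candidate is to orient $G$ acyclically (e.g.\ by fixing a linear order of $V(G)$ and orienting each edge from smaller to larger endpoint); an acyclic orientation trivially has no directed cycle at all, hence no directed odd cycle, so the ``no directed odd cycle'' hypothesis is automatically satisfied. The price is that we lose the freedom to use directed even cycles, but the conjecture only \emph{permits} them, it does not require them; so it suffices to produce a sequential decomposition into directed paths (i.e.\ each $H_i$ is a union of vertex-disjoint directed paths, equivalently $\Delta^+(H_i)\le 1$ and $\Delta^-(H_i)\le 1$). Thus the real task becomes: given an acyclic digraph $\up{G}=J$, decompose its arc set into ordered layers $H_1,H_2,\ldots$, each a ``functional-both-ways'' subdigraph (in-degree and out-degree at most $1$), so that for every vertex $v$ the sequence $\dout_{H_i}(v)$ is non-increasing in $i$.

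\medskip

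Next, the decomposition itself. I would peel off layers from the top: repeatedly extract a \emph{maximal} subdigraph $H_i\subseteq J$ with $\Delta^+(H_i)\le 1$ and $\Delta^-(H_i)\le 1$ of a prescribed type, set $J:=J-E(H_i)$, and iterate. To control outdegrees, I would pick $H_i$ to grab, for every vertex of current positive outdegree, exactly one outgoing arc — i.e.\ $H_i$ should induce a partial function $f$ on $V(J)$ with domain equal to $\{v: \dout_J(v)>0\}$ that is also injective (so that in-degrees stay $\le 1$). Such an injective partial function exists whenever no ``Hall obstruction'' occurs; since $J$ is acyclic this can be arranged by a matching/augmenting-path argument in the bipartite ``tails vs.\ heads'' incidence graph — indeed, for an acyclic digraph the arc set always decomposes into $\Delta^+(J)$ such layers by a Vizing-type / König-type argument on that bipartite graph, which is the crux lemma to isolate and prove cleanly. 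The key point for the non-increasing condition is that if at each step $H_i$ uses one out-arc at every vertex that still has out-arcs, then $\dout_{H_i}(v)=1$ for as long as $v$ has undeleted out-arcs and $\dout_{H_i}(v)=0$ afterward, which is exactly a non-increasing $0/1$ sequence; and each such $H_i$, being a partial injective function on an acyclic digraph, has every component a directed path (no cycle can appear since the arcs sit inside the acyclic $J$).

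\medskip

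I expect the main obstacle to be the crux lemma: showing that at each peeling step one can choose $H_i$ to contain an out-arc at \emph{every} vertex of positive outdegree while keeping in-degrees $\le 1$. Phrased in the bipartite incidence graph $B$ with one side $V^{+}=\{v:\dout_J(v)>0\}$, other side $V$, and an edge $v\!-\!w$ for each arc $v\to w$ of $J$, what I need is a matching of $B$ saturating $V^{+}$. By Hall's Theorem this fails only if some $\Xo\subseteq V^{+}$ has $|N_B(\Xo)|<|\Xo|$, i.e.\ all out-arcs from $\Xo$ land inside a set of fewer than $|\Xo|$ vertices; but then $J[\,\Xo\cup N_B(\Xo)\,]$ has more arcs than vertices and hence contains a directed cycle, contradicting acyclicity. (A little care is needed because arcs from $\Xo$ could go to vertices with no out-arcs; those vertices still count in $N_B(\Xo)$, so the counting argument is unaffected.) Once this lemma is in hand, the induction on $\Delta^+(J)$ closes immediately: $H_1$ lowers the maximum outdegree by exactly $1$ at every vertex attaining it, the residual digraph is still acyclic, and its decomposition $H_2,H_3,\ldots$ (which exists by induction and has non-increasing outdegree sequences) prepends after $H_1$ without violating monotonicity, since $\dout_{H_1}(v)\ge\dout_{H_2}(v)$ always holds when $\dout_{H_1}(v)=1$ whenever $\dout_{H_2}(v)\ge 1$. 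A secondary point to verify carefully is the ordering convention: I must make sure the layers are emitted in the order that makes outdegrees \emph{decrease} (largest-degree layer first), which is just a matter of reversing the order of extraction if one peels from the bottom instead.
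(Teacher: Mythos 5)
The statement you are trying to prove is Conjecture~\ref{coj:decomp}, which the paper leaves \emph{open} in general; the paper establishes it only for chordal graphs, via the simplicial-elimination-order orientation (Lemma~\ref{lem:simp}). So a blind proof for arbitrary $G$ would be a substantial new result, and indeed your argument has a fatal gap.

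Your ``crux lemma'' --- that in any acyclic digraph $J$ there is a system of distinct representatives mapping each vertex of positive outdegree to one of its out-neighbors --- is false. Take $G = K_{1,2}$ with center $z$ and leaves $a,b$, and orient both edges toward $z$, giving the acyclic digraph with arcs $a\to z$, $b\to z$. Here $V^+=\{a,b\}$ and $\Xo=\{a,b\}$ has $N_B(\Xo)=\{z\}$, so Hall fails. Your claimed contradiction does not materialize: $J[\Xo\cup N_B(\Xo)]$ has three vertices and only two arcs, not ``more arcs than vertices,'' and of course contains no directed cycle. The counting you wanted would only go through if $N_B(\Xo)\subseteq\Xo$ (so that every vertex of $J[\Xo]$ has positive outdegree inside $J[\Xo]$, forcing a cycle), but Hall's condition failing does not give you that containment. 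The error propagates: the in-star $K_{1,n}$ with all arcs pointing to the center admits no sequential decomposition of the required kind at all (each leaf would have to fire an out-arc in $H_1$, overloading the center's in-degree), so the approach ``pick any acyclic orientation and peel'' cannot be repaired by a better peeling rule --- the orientation itself must be chosen carefully, which is exactly where the difficulty of the conjecture lives. The ``K\"onig-type'' remark suffers from a related confusion: properly edge-coloring the tails-vs-heads bipartite graph with $\max(\Delta^+,\Delta^-)$ colors does not make the colors on $v$'s out-arcs equal to the initial segment $\{1,\ldots,\dout_J(v)\}$, which is what the monotone-outdegree condition demands; that is an interval-type coloring constraint, not something K\"onig's theorem provides.

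If you want a foothold, compare with the paper's Lemma~\ref{lem:simp}: the orientation used there points each edge from the larger to the smaller endpoint of a simplicial elimination order, which guarantees that the smallest remaining vertex is a sink whose in-neighbors form a clique; that local clique structure is what lets the induction distribute out-arcs without creating in-degree collisions. For non-chordal $G$ no such ordering exists, and the conjecture is genuinely open.
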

We call a sequential decomposition of the form requested by
Conjecture~\ref{coj:decomp} a \emph{good decomposition} of $J$.  In
the rest of this section, we prove that Conjecture~\ref{coj:decomp}
implies Conjecture~\ref{coj:sec1deg}, and we prove
Conjecture~\ref{coj:decomp} for chordal graphs, which implies the
following special case of Conjecture~\ref{coj:sec1deg}.
\begin{theorem}\label{thm:chordal}
  If $D$ is a $k$-optimal set in a chordal graph $G$, then $G$ has a $k$-edge-chromatic
  subgraph in which every vertex of $V(G)-D$ has degree $k$.
\end{theorem}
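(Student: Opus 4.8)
The plan is to derive Theorem~\ref{thm:chordal} from the (already announced) fact that Conjecture~\ref{coj:decomp} implies Conjecture~\ref{coj:sec1deg}, by exhibiting a good decomposition of a suitable orientation of every chordal graph. So the real content is: every chordal graph $G$ admits an orientation $J$ with no directed odd cycle that has a sequential decomposition $H_1, H_2, \ldots$ in which each $H_i$ is a disjoint union of directed paths and directed even cycles, and the outdegree sequences are pointwise nonincreasing in $i$. Since $J$ will in fact be acyclic (see below), the ``no directed odd cycle'' and ``directed even cycle'' conditions become vacuous, so we only need each $H_i$ to be a union of directed paths with $\dout_{H_i}(v) \ge \dout_{H_{i+1}}(v)$ for every $v$ and every $i$.

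The orientation to use is the standard \emph{perfect elimination ordering} orientation: fix a perfect elimination ordering $v_1, \ldots, v_n$ of $G$, so that for each $j$ the later neighbors $N^+(v_j) = \{v_\ell \in N(v_j) : \ell > j\}$ form a clique, and orient every edge from its lower-indexed endpoint to its higher-indexed endpoint. This $J$ is acyclic. The key structural feature is that the out-neighborhood of each vertex is a clique, so along any directed path $v_{j_1} \to v_{j_2} \to \cdots$ consecutive triples need not be cliques, but the out-neighborhoods telescope nicely; I would use this to build the decomposition greedily. Concretely, I would peel off $H_1$ as a maximal ``out-forest'' matching structure — at each vertex of positive outdegree, select exactly one outgoing arc, chosen consistently so that the selected arcs decompose into directed paths (this is automatic: if each vertex selects at most one outgoing arc and at most one incoming arc is selected at it, the selected arcs form paths; the clique condition on out-neighborhoods is what lets us route the selection so that in-selections don't collide, or alternatively one simply takes $H_1$ to be a maximum-size union of directed paths covering as many ``top'' arcs as possible). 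Then recurse on $J - E(H_1)$. The outdegree of every vertex drops by exactly one whenever it had positive outdegree, so $\dout_{H_i}(v)$ is the indicator that $\dout_J(v) \ge i$, which is visibly nonincreasing in $i$; this is exactly the monotonicity clause of a good decomposition.

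The main obstacle is the path-decomposition requirement: simply having every vertex pick one outgoing arc produces a subdigraph of maximum outdegree $1$, but its in-degrees could be large, so the components are ``in-stars'' or general in-branchings, not paths. The fix is to be more careful in the selection, and this is where chordality (beyond mere acyclicity) is essential: I would process vertices in reverse elimination order and, using that $N^+(v_j)$ is a clique, argue that one can always extend a partial system of vertex-disjoint directed paths so that it uses, at each vertex, one outgoing and one incoming arc among the currently-available arcs whenever the outdegree is positive — the clique structure guarantees enough ``room'' to reroute. Formally I expect this to reduce to a statement of the form ``the arc set of $J$ decomposes into $\Delta^+(J)$ linear forests, with the $i$th forest using an arc out of $v$ iff $\dout_J(v)\ge i$,'' which for interval-like/chordal orientations should follow from an inductive argument on $n$ by deleting the last vertex of the elimination ordering and re-inserting its (cliquey) out-neighborhood into the existing forests. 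If that inductive step goes through, the theorem follows immediately by invoking the Conjecture~\ref{coj:decomp} $\Rightarrow$ Conjecture~\ref{coj:sec1deg} implication established earlier in the section, applied to this good decomposition.
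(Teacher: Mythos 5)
Your orientation choice is backwards, and this is fatal. You orient each edge from its lower-indexed endpoint to its higher-indexed endpoint, so that out-neighborhoods are cliques. The paper's Lemma~\ref{lem:simp} does the opposite: each edge $uv$ with $u<v$ is oriented from $v$ to $u$, so that outgoing arcs point toward \emph{earlier} vertices. Your orientation cannot admit a good decomposition already for the star $K_{1,5}$ with center $v$ placed last in the elimination order: every leaf $u_j$ has $\dout_J(u_j)=1$, so the monotonicity clause forces $\dout_{H_1}(u_j)=1$ for all $j$, i.e.\ all five arcs $u_j\to v$ must lie in $H_1$, whose unique nontrivial component is then an in-star at $v$, not a directed path. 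Equivalently, your claim that ``$\dout_{H_i}(v)$ is the indicator that $\dout_J(v)\ge i$'' is not achievable under your orientation. With the paper's reversed orientation the same star is harmless: $v$ has out-degree $5$ and one can take each $H_i$ to be the single arc $v\to u_i$. The entire point of orienting toward earlier vertices is that a vertex simplicial at elimination time has out-degree $0$ and its in-neighborhood is a clique, which is what drives both the paper's induction and the ``no collision'' property you are hoping for.

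There is also a structural mismatch with the paper's argument. You route everything through Conjecture~\ref{coj:decomp} and Lemma~\ref{lem:decomp-sat} (Galvin's kernel method), but the paper's proof of Theorem~\ref{thm:chordal} does not actually construct a good decomposition: it proves $\dl$-saturability directly by induction on $n$ (Lemma~\ref{lem:simp}), deleting the first vertex $v$ of the elimination order, observing that the clique $N(v)=\{w_1<\cdots<w_t\}$ satisfies $\dl(w_i)\ge i$ so that a system of distinct colors $c_i\in\lz(w_i)$ can be chosen greedily, recursing on $G-v$ with the lists $\lz(w_i)-c_i$, and then patching in the edges $vw_i$ as needed; Theorem~\ref{thm:chordal} then follows from Lemma~\ref{lem:satur}. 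So even if you fixed the orientation, your route is longer and, as written, remains a sketch: phrases like ``I expect this to reduce to'' and ``if that inductive step goes through'' flag precisely the step that needs to be a proof. The concrete suggestion is to reverse your orientation, abandon the explicit decomposition, and instead carry out the induction on the elimination order directly at the level of saturating partial edge colorings, using the clique structure of $N(v)$ to supply a system of distinct representatives for the lists of $v$'s neighbors.
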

The proof requires some new definitions. Our main definition is a
variant on list coloring. While the concept of a list assignment is
well-studied in other contexts, the concept of an $\lz$-saturating
partial edge coloring for a list assignment is, to our knowledge,
new. (Typically, list assignments are used in vertex coloring
problems, and represent the colors available at a vertex.)
\begin{definition}\label{def:saturate}
  A \emph{list assignment} on a graph $G$ is a function $\lz$ that
  assigns to each vertex $v$ a set of colors $\lz(v)$. A (proper)
  partial edge coloring is \emph{$\lz$-saturating} if for every vertex
  $v$ and every color $c \in \lz(v)$, some edge incident to $v$ receives
  the color $c$. A graph $G$ is \emph{$\lz$-saturable} if it has an
  $\lz$-saturating partial edge coloring.  When $f$ is a function from
  $V(G)$ into the nonnegative integers, we say that $G$ is
  \emph{$f$-saturable} if $G$ is $\lz$-saturable whenever $\sizeof{\lz(v)}
  \leq f(v)$ for all $v$.
\end{definition}
The connection between Conjecture~\ref{coj:sec1deg} and Definition~\ref{def:saturate}
is given by the following lemma.
\begin{lemma}\label{lem:satur}
  Let $D$ be a $k$-optimal set in a graph $G$, and let $X =
  V(G)-D$. If $G[X]$ has some orientation $J$ such that $G[X]$ is
  $\dl$-saturable, then $G$ has a $k$-edge-chromatic subgraph in which
  all vertices of $X$ have degree $k$.
\end{lemma}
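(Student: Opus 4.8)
The plan is to combine the $\dl$-saturating partial edge coloring of $G[X]$ with a $k$-edge-coloring of $G[D]$ and a suitable $k$-edge-chromatic subgraph of the bipartite graph $H$ between $D$ and $X$ supplied by Theorem~\ref{thm:main}. The intuition is that for each vertex $v \in X$, the saturating coloring of $G[X]$ gives $v$ exactly $\dl(v)$ colors (the colors of its out-arcs, say), and Theorem~\ref{thm:main} should supply the remaining $k - \dl(v)$ colors along edges into $D$, so that $v$ ends up with degree exactly $k$ in a $k$-edge-chromatic subgraph.

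First I would fix a set $[k]$ of colors, take a $\dl$-saturating proper partial edge coloring $c$ of $G[X]$ (which exists by hypothesis, since $\sizeof{\{\text{colors on out-arcs of }v\}} \le \dl(v)$, so the list assignment $v \mapsto \{\text{out-arc colors of }v\}$ has $\sizeof{\lz(v)} \le \dl(v)$ — actually one must be slightly careful and instead argue that we may choose the orientation-derived list to be exactly the color set used), and observe that in this coloring every vertex $v \in X$ meets edges of all $\dl(v)$ colors in its list. Next, for each $v \in X$ let $R(v) \subseteq [k]$ be a set of $k - \dl(v)$ colors \emph{missing} at $v$ under $c$; we want a $k$-edge-chromatic subgraph $M$ of $H$ in which $v$ is incident, for each color in $R(v)$, to exactly one edge of that color. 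Theorem~\ref{thm:main} (applied to the $k$-optimal set $D$ and the orientation $J$) gives a $k$-edge-chromatic subgraph $M_0$ of $H$ with $d_{M_0}(v) + \dl(v) \ge k$ for all $v \in X$; by deleting edges we may assume $d_{M_0}(v) = \max\{0, k - \dl(v)\} = k - \dl(v)$ (using $\dl(v) \le k$, which we may assume as in the proof of Theorem~\ref{thm:main}). The final step is to re-color $M_0$ so that its colors at each $v \in X$ avoid the colors already used at $v$ in $c$; since $M_0$ is $k$-edge-chromatic, it suffices to find a proper edge coloring of $M_0$ with colors in $[k]$ such that each $v \in X$ receives exactly the colors of $R(v)$, and such recolorings exist because $M_0$ is bipartite with all $X$-degrees equal to $k - \dl(v) = \sizeof{R(v)}$ and all $D$-degrees at most $k$; this is a bipartite list-edge-coloring statement that follows from Galvin's theorem or directly from an Eulerian/flow argument on the bipartite degree-constrained subgraph. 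Then $M := (\text{recolored }M_0) \cup c|_{G[X]}$ is a proper $k$-edge-coloring of a subgraph of $G$ in which every $v \in X$ has degree $\dl(v) + (k - \dl(v)) = k$.

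The main obstacle I expect is the recoloring step: Theorem~\ref{thm:main} only guarantees that $M_0$ is $k$-edge-chromatic with the right \emph{degrees}, not that it can be colored with the prescribed color \emph{sets} at the $X$-side. Making this work requires that the prescribed sets $R(v)$ can be realized simultaneously as the color classes at the $X$-vertices of a proper $k$-edge-coloring of $M_0$. On the bipartite graph $M_0$, where $d_{M_0}(v) = \sizeof{R(v)}$ for $v \in X$, this amounts to decomposing $M_0$ into $k$ matchings $N_1, \dots, N_k$ such that $N_j$ saturates exactly $\{v \in X : j \in R(v)\}$ on the $X$-side; by König's edge-coloring theorem $M_0$ decomposes into $k$ matchings $N_1,\dots,N_k$, and since $d_{M_0}(v) = \sizeof{R(v)}$ each $v \in X$ is saturated by exactly $\sizeof{R(v)}$ of these matchings, so one only needs to permute the color labels at each $X$-vertex independently — but a naive per-vertex permutation destroys properness at the $D$-side. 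The clean fix is to invoke Galvin's theorem (edge-list-chromatic number of a bipartite graph equals its maximum degree): assign each edge $uv$ with $u \in D$, $v \in X$ the list $R(v)$ (of size $k - \dl(v) = d_{M_0}(v) \ge \Delta$-locally what is needed), and apply the list version; since $M_0$ is bipartite with maximum degree at most $k$, and each $X$-vertex needs exactly all of its list, a short argument (or a direct appeal to the degree-constrained bipartite list coloring literature) yields the desired coloring. I would present this via Galvin's kernel method, matching the paper's stated approach.
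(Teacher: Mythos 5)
The proposal reverses the order of operations in a way that creates a genuine gap. You first fix a $\dl$-saturating partial coloring $c$ of $G[X]$ (which colors to saturate at each vertex is itself unclear in your setup, since $\dl$-saturability is a ``for all lists'' property and you have no canonical list to hand it yet), and you then want to recolor the bipartite subgraph $M_0$ from Theorem~\ref{thm:main} so that each $v\in X$ sees exactly the complementary color set $R(v)$. That recoloring step does not follow from Galvin's theorem or from any ``degree-constrained bipartite list coloring'' result: the lists on edges at $v$ have size $d_{M_0}(v)=\sizeof{R(v)}$, which can be much smaller than $\Delta(M_0)$ (since the $D$-side degrees can be as large as $k$), so plain Galvin does not apply; and the stronger Borodin--Kostochka--Woodall bound requires list sizes $\geq\max\{d(u),d(v)\}$ on each edge $uv$, which again fails when the $D$-endpoint has higher degree. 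In fact the desired recoloring can simply fail to exist: take $X=\{a,b,c\}$, $D=\{d,e\}$, $M_0$ with edges $ad,ae,bd,be,cd$, $k=3$, and prescribed sets $R(a)=R(b)=\{1,2\}$, $R(c)=\{1\}$ (so $d_{M_0}(v)=\sizeof{R(v)}$ for all $v\in X$). Then $cd$ must be colored $1$, forcing $\{ad,bd\}$ to use $\{2,3\}$, so one of $a,b$ sees color $3\notin\{1,2\}$. Nothing in Theorem~\ref{thm:main} prevents $M_0$ and $R$ from looking like this, so the proposal breaks at exactly the step you flagged as the obstacle.

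The paper avoids this trap by doing things in the opposite order: it first applies Theorem~\ref{thm:main} to fix the bipartite subgraph $M'=M'_1\cup\cdots\cup M'_k$ \emph{together with a specific decomposition into matchings}, and only then defines the list $\lz(v)=\{i\in[k]\st v\text{ unsaturated by }M'_i\}$, which has $\sizeof{\lz(v)}=k-d_{M'}(v)\leq\dl(v)$. Because $\dl$-saturability is a universally quantified ``for every list assignment'' hypothesis, it can be invoked with this particular $\lz$ to produce a partial coloring $\psi$ of $G[X]$ whose color classes $M^*_1,\ldots,M^*_k$ cover exactly the colors each $v$ is missing in $M'$. The two families are then merged by $M_i=M^*_i\cup\{e\in M'_i\st e\text{ disjoint from all of }M^*_i\}$, which stays proper and gives every $v\in X$ degree $k$. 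The lesson is that the freedom in $\dl$-saturability should be spent \emph{after} the bipartite matching is pinned down, not before.
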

\begin{proof}
  Let $J$ be any such orientation of $X$, and let $M'$ be the
  $k$-edge-chromatic subgraph given by Theorem~\ref{thm:main}. Write
  $M' = M'_1 \cup \cdots \cup M'_k$, where each $M_i$ is a matching,
  and for each $v \in X$, let $\lz(v)$ be the set of all indices
  $i \in [k]$ such that $v$ is not saturated by $M'_i$. The degree
  condition on $M'$ yields $\sizeof{\lz(v)} = \min\{\dl(v), k\}$.

  Since $G[X]$ is $\dl$-saturable, there is a partial edge coloring
  $\psi$ of $G[X]$ that is $\lz$-saturating. For $i \in [k]$, let $M^*_i$
  be the set of edges that receive color $i$ in $\psi$. Now we combine the
  matchings $M_1, \ldots, M_k$ with the matchings $M^*_1, \ldots, M^*_k$.
  For each $i \in [k]$, let $M_i$ be the set defined by
  \[ M_i = M^*_i \cup \{e \in M'_i \st \text{$e \cap e^* = \emptyset$
    for all $e^* \in M^*_i$}\}. \] By construction, each edge set $M_i$
  is a matching. Furthermore, every vertex of $v$ is incident to an
  edge in each $M_i$, since if $v$ is not incident to any edge of
  $M'_i$, then $i \in L(v)$, so that $v$ is incident to an edge in
  $M^*_i$. Thus, the $k$-edge-chromatic subgraph $M_1 \cup \cdots \cup
  M_k$ has the desired properties.
\end{proof}
\begin{lemma}\label{lem:decomp-sat}
  Let $J$ be an orientation of a graph $G$. If $J$ admits a good decomposition,
  then $G$ is $\dl$-saturable.
\end{lemma}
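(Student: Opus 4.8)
The plan is to use a good decomposition $H_1, H_2, \ldots$ of $J$ as a ``greedy schedule'' for building an $\dl$-saturating partial edge coloring of $G$, processing the subdigraphs in order and, within each $H_i$, coloring edges with the single color $i$. Since each component of $H_i$ is a directed path or directed even cycle, the underlying undirected graph of $H_i$ is a disjoint union of paths and even cycles, hence bipartite and $2$-edge-colorable; in particular each $H_i$ has a perfect matching on each even-cycle component and a near-perfect structure on each path. The key point is that in a directed path or directed even cycle, every vertex has outdegree at most $1$, so $\dout_{H_i}(v) \in \{0,1\}$ for every $v$ and every $i$. The monotonicity condition $\dout_{H_i}(v) \geq \dout_{H_{i+1}}(v)$ then says: the set of indices $i$ for which $v$ has outdegree $1$ in $H_i$ is a prefix $\{1, 2, \ldots, \dl(v)\}$ of the positive integers (recall $\dl(v)$ is the total outdegree of $v$ in $J$, which equals the number of $H_i$ in which $v$ has an outgoing arc).

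Here is how I would build the coloring. Suppose we are given a list assignment $\lz$ with $\sizeof{\lz(v)} \leq \dl(v)$ for all $v$; relabel so that we may assume $\lz(v) \subseteq [\dl(v)]$ after a suitable choice, but actually we must be careful — the lists need not be initial segments. The right move is: for each vertex $v$, fix a bijection between $\lz(v)$ and the set $\{1, \ldots, \sizeof{\lz(v)}\}$, and observe that the indices $i$ for which $v$ has outdegree $1$ in $H_i$ are exactly $1, \ldots, \dl(v) \supseteq \{1, \ldots, \sizeof{\lz(v)}\}$. So for each color $c \in \lz(v)$, let $\sigma_v(c)$ be its rank in $\lz(v)$; then $\sigma_v(c) \leq \sizeof{\lz(v)} \leq \dl(v)$, so $v$ has an outgoing arc in $H_{\sigma_v(c)}$. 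The idea is to color, for each $i$, the arcs of $H_i$ with whichever color is ``requested'' of their tail, and then verify consistency. Concretely: process $i = 1, 2, \ldots$; the arcs of $H_i$ form vertex-disjoint paths and even cycles, and I claim we can properly color a subset of $E(H_i)$ all with color $i$ so that each vertex $v$ with $i$ in the image of $\sigma_v$ (i.e. with some color $c \in \lz(v)$ of rank $i$) is covered. Since each component of $H_i$ is a path or even cycle, it has a near-perfect matching, and we can choose a matching of $H_i$ covering any prescribed subset of its degree-$1$-ish vertices provided the prescribed set is ``matchable'' — on an even cycle any matching is one of two perfect matchings, so every vertex is covered; on a path the two maximal matchings together cover all vertices, and we may need to be cleverer.

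The main obstacle, and the step I would spend the most care on, is exactly this last point: ensuring that within each $H_i$ we can simultaneously cover every vertex $v$ for which color $i$ is the $\sigma_v$-image of some list color, using a matching (so that the partial coloring stays \emph{proper} — only one edge of color $i$ at each vertex). The reason this should work is that ``$v$ needs color $i$'' forces $v$ to have outdegree $1$ in $H_i$, so $v$ is an internal vertex of its path component or lies on an even cycle; a vertex that is an \emph{endpoint} of a path component of $H_i$ has outdegree $0$ there and hence never needs color $i$. Thus every vertex needing color $i$ is a non-source vertex of its component, and on a directed path $v_0 \to v_1 \to \cdots \to v_m$ the vertices with outdegree $1$ are $v_0, \ldots, v_{m-1}$; the matching $\{v_0v_1, v_2v_3, \ldots\}$ covers all of these except possibly one, and by also allowing ourselves to cover $v_{m-1}$ via the edge $v_{m-1}v_m$ we see the two natural matchings of the path jointly cover all outdegree-$1$ vertices — but we need a \emph{single} matching. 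I would resolve this by noting that we do not need to cover every vertex \emph{in step $i$}: a vertex $v$ needs \emph{some} edge of color $i$ only if $i = \sigma_v(c)$ for $c \in \lz(v)$, and we get to choose the bijection $\sigma_v$; choosing it greedily component-by-component (or, more robustly, arguing that on each path component of $H_i$ the required-to-be-covered vertices form a set that a single matching of that path can cover because they never include the unique sink of a path) is the crux. I would finish by checking that colors $i$ assigned in different steps never collide at a vertex (each step uses a fresh color, so at vertex $v$ the edges colored $i$ come only from step $i$, and there is at most one such since we used a matching), and that every $c \in \lz(v)$ is realized (at step $\sigma_v(c)$, vertex $v$ is covered by an edge colored $\sigma_v(c) \mapsto$ reinterpreted as $c$), completing the construction of an $\lz$-saturating proper partial edge coloring of $G$.
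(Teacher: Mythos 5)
There is a genuine gap in the middle of your proposal, and you essentially name it yourself without closing it. The structural observations at the start are correct: every vertex has outdegree $0$ or $1$ in each $H_i$, the monotonicity condition makes the indices where $v$ has outdegree $1$ an initial segment $\{1, \ldots, \dl(v)\}$, and on each path component of $H_i$ there is a single matching covering every vertex of positive outdegree (the matching starting from the source, $\{v_0v_1, v_2v_3, \ldots\}$). But once you try to turn this into a \emph{proper} $\lz$-saturating coloring, the plan breaks down. You cannot color every edge of the step-$i$ matching with the single abstract color $i$, because each vertex $v$ needs the \emph{actual} list color $\sigma_v^{-1}(i) \in \lz(v)$, and these differ from vertex to vertex. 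And you cannot instead color each matching edge $uv$ with the color requested at its tail, because (a) an edge can carry only one color so only one endpoint can be satisfied, and (b) the covering matching misses the outgoing arcs of about half the positive-outdegree vertices, so those vertices aren't the tail of anything in the matching. If you try to include more arcs so every positive-outdegree vertex is the tail of a chosen arc, you lose the matching property and adjacent arcs can collide in color whenever consecutive vertices happen to request the same color. The sentence ``we get to choose the bijection $\sigma_v$'' is pure hope: with adversarially chosen lists, there is no local, per-step choice of rankings that resolves the conflicts — this is exactly the global consistency problem that list edge coloring creates, and it is the whole difficulty of the lemma.

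The paper closes this gap with Galvin's kernel method rather than a greedy schedule. It builds a bipartite auxiliary graph $U$ on two disjoint copies $X, Y$ of $V(G)$ with an edge $u_xv_y$ for each arc $(u,v)$ of $J$; the good decomposition gives a proper edge coloring $\phi$ of $U$ whose colors at each $u_x$ are an initial segment of length $\dl(u)$ (this is where your monotonicity observation is used), and Galvin's lemma together with Bondy--Boppana--Siegel yields that $L(U)$ is $f$-choosable with $f(u_xv_y) = \dl(u)$. Lifting $\lz$ from $G$ to $L(U)$ and pulling the resulting proper list coloring back to $J$, each color class becomes a subdigraph of $J$ of max in- and out-degree $1$; \emph{here} the paper uses the ``no directed odd cycle'' hypothesis on $J$ (which your proposal never touches) to guarantee these components are directed paths or even cycles, hence admit a matching covering all positive-outdegree vertices. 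That global list-coloring step is the ingredient missing from your argument; the surrounding observations you make are compatible with the paper's proof, but they do not by themselves produce a proper $\lz$-saturating coloring.
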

The proof of Lemma~\ref{lem:decomp-sat} uses Galvin's kernel method; we reproduce
the relevant definitions and lemmas here.
The proof uses Galvin's kernel method~\cite{Galvin}, and we reproduce the
relevant definitions and lemmas here.
\begin{definition}
  A \emph{kernel} of a digraph $D$ is an independent set $S$ such that
  for every $v \in D-S$, there is some $w \in S$ with $vw \in E(D)$. A
  digraph is \emph{kernel-perfect} if every induced subgraph has a kernel.
\end{definition}
\begin{definition}
  If $\lz$ is a list assignment on $G$, a \emph{proper $\lz$-coloring}
  of $G$ is a proper coloring $\phi$ of $G$ such that $\psi(v) \in
  \lz(v)$ for all $v \in V(G)$. If $f$ is a function from $V(G)$ into
  the nonnegative integers, we say that $G$ is \emph{$f$-choosable}
  if: for every list assignment $\lz$ with $\sizeof{\lz(v)} \geq f(v)$
  for all $v \in V(G)$, there is a proper $\lz$-coloring of $G$.
\end{definition}
List colorings were first studied by Erd\H os, Rubin, and Taylor~\cite{ERT}
and by Vizing~\cite{Vizing76}.
\begin{lemma}[Bondy--Boppana--Siegel, see \cite{Galvin}]\label{lem:BBS}
  If $D$ is a kernel-perfect orientation of a graph $G$ and $f(v) = 1+d_D^+(v)$
  for all $v \in V(G)$, then $G$ is $f$-choosable.
\end{lemma}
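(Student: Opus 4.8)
The plan is to reprove this by Galvin's kernel argument, inducting on $\sizeof{V(G)}$. The base case $V(G) = \emptyset$ is vacuous. For the inductive step, suppose $\lz$ is a list assignment with $\sizeof{\lz(v)} \geq 1 + \dout_D(v)$ for every $v$. Each list is nonempty (it has size at least $1$), so since $V(G) \neq \emptyset$ we may fix a color $c$ lying in some list and set $W = \{v \in V(G) \st c \in \lz(v)\}$, which is nonempty. As $D$ is kernel-perfect, the induced subdigraph $D[W]$ has a kernel $S$; note $S \neq \emptyset$, since a kernel of a nonempty digraph cannot be empty. Assign the color $c$ to every vertex of $S$; this is a proper coloring of $G[S]$ because $S$ is independent.

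Next I would delete $S$ and remove $c$ from every surviving list, obtaining a graph $G' = G - S$, the induced subdigraph $D' = D - S$, and lists $\lz'$, and then check that the hypotheses of the lemma still hold for $(G', D', \lz')$. The digraph $D'$ is kernel-perfect, being an induced subdigraph of $D$. For $v \in V(G') \setminus W$ the list is unchanged and the outdegree does not increase, so $\sizeof{\lz'(v)} \geq 1 + \dout_{D'}(v)$. For $v \in W \setminus S$ the crucial point is that, since $S$ is a kernel of $D[W]$, the vertex $v$ has an out-neighbor in $S$, whence $\dout_{D'}(v) \leq \dout_D(v) - 1$; combined with $\sizeof{\lz'(v)} = \sizeof{\lz(v)} - 1 \geq \dout_D(v)$ this gives $\sizeof{\lz'(v)} \geq 1 + \dout_{D'}(v)$. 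By the induction hypothesis $G'$ has a proper $\lz'$-coloring, and together with the coloring of $S$ by $c$ this yields a proper $\lz$-coloring of $G$: no edge inside $S$ is monochromatic since $S$ is independent, and no edge from $S$ to $V(G')$ is monochromatic since its $G'$-endpoint is never colored $c$ (either $c$ was never in its list, or it was removed when forming $\lz'$).

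The only step requiring any thought is the outdegree bookkeeping in the inductive step, and that is exactly where kernel-perfection is used: we need every still-uncolored vertex of $W$ to lose at least one out-arc when $S$ is deleted, which is precisely the defining property of a kernel. The remaining verifications — propriety of the combined coloring, and the list-versus-outdegree inequality for vertices outside $W$ — are routine.
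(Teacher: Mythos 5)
The paper states this lemma as a cited result from Galvin's paper and does not supply its own proof, so there is nothing internal to compare against; I will simply assess your argument. Your proof is correct and is the standard kernel-method proof of the Bondy--Boppana--Siegel lemma. The induction on $\sizeof{V(G)}$, the choice of a color $c$ and the set $W$ of vertices having $c$ on their list, the use of kernel-perfection to extract a kernel $S$ of $D[W]$, coloring $S$ with $c$, and the bookkeeping that deleting $S$ drops both the list size and the outdegree of every surviving vertex of $W$ by at least one are all in order. You also correctly note that a kernel of a nonempty digraph is nonempty (otherwise some vertex would need an out-neighbor in the empty set), which guarantees progress in the induction, and that induced subdigraphs of kernel-perfect digraphs remain kernel-perfect, which is needed to apply the induction hypothesis to $D'=D-S$. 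No gaps.
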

\begin{lemma}[Galvin \cite{Galvin}]\label{lem:galvin}
  Let $H$ be a line graph of an $(X,Y)$-bigraph $U$, and let $\phi$ be an
  integer-valued proper edge coloring of $U$. Define an orientation of $H$
  as follows: for $e_1e_2 \in E(H)$ with $\phi(e_1) < \phi(e_2)$, orient the
  edge $e_1e_2$ from $e_1$ to $e_2$ if their common endpoint lies in $X$,
  and orient the edge from $e_2$ to $e_1$ if their common endpoint lies in $Y$.
  The resulting orientation is kernel-perfect.
\end{lemma}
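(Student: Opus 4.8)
The plan is to reproduce Galvin's argument, whose core idea is that kernels of this particular orientation of $H$ correspond exactly to \emph{stable matchings} of the bipartite graph $U$, in the sense of Gale and Shapley. First I would reduce the statement to showing that $H$ itself has a kernel. Any induced subgraph of $H$ is the subgraph of $L(U)$ on some vertex set $F \subseteq E(U)$; this is precisely the line graph of the subgraph $U'$ of $U$ with edge set $F$ (which is again an $(X',Y')$-bigraph with $X' = X \cap V(U')$, $Y' = Y \cap V(U')$), and $\phi$ restricts to a proper edge coloring $\phi|_F$ of $U'$. The orientation that $H$ induces on this subgraph is exactly the one produced by the construction applied to $U'$ and $\phi|_F$. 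So it suffices to prove: for every $(X,Y)$-bigraph $U$ with proper edge coloring $\phi$, the orientation of $L(U)$ described in the lemma has a kernel.

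Next I would translate the kernel condition into the language of matchings in $U$. A kernel of the oriented $H$ is an independent vertex set of $H$ — equivalently, a matching $M$ in $U$ — such that every edge $e \notin M$ has an out-neighbor in $M$. Unwinding the orientation rule, an arc $e \to f$ exists in $H$ if and only if $e$ and $f$ share an endpoint $v$ and either $v \in X$ with $\phi(e) < \phi(f)$, or $v \in Y$ with $\phi(e) > \phi(f)$. Thus $M$ is a kernel precisely when for each $e = xy \notin M$ there is some $f \in M$ with either $f$ incident to $x$ and $\phi(f) > \phi(e)$, or $f$ incident to $y$ and $\phi(f) < \phi(e)$.

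Now set up the stable-marriage instance: let each $x \in X$ prefer incident edges of \emph{larger} $\phi$-value and each $y \in Y$ prefer incident edges of \emph{smaller} $\phi$-value. Because $\phi$ is a proper edge coloring, the edges incident to any fixed vertex receive pairwise distinct colors, so each of these preference orders is a strict total order. By the Gale--Shapley theorem (valid for bipartite graphs with strict, possibly incomplete, preference lists — one may run deferred acceptance directly, or extend to complete lists by ranking every non-incident partner below the ``unmatched'' option), there is a stable matching $M$: no $e = xy \notin M$ is such that $x$ is unmatched or strictly prefers $e$ to its $M$-partner and, simultaneously, $y$ is unmatched or strictly prefers $e$ to its $M$-partner. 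To finish, observe that $M$, being a matching, is independent in $H$, and for any $e = xy \notin M$ stability forces one of the two blocking conditions to fail; since there are no ties, this means either $x$ is matched to some $f$ with $\phi(f) > \phi(e)$ or $y$ is matched to some $f$ with $\phi(f) < \phi(e)$. By the translation above, $e$ has an out-neighbor in $M$ in either case, so $M$ is a kernel. As the argument applies to every induced subgraph, the orientation is kernel-perfect.

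The main obstacle — indeed essentially the only subtle point — is getting the two preference directions to align with the orientation convention so that ``$e$ is a blocking pair of $M$'' coincides exactly with ``$e$ is not dominated by $M$'': this is a short sign-tracking check, but it is where the proof can easily go wrong. A secondary bookkeeping point is to invoke the version of the stable matching theorem that allows incomplete preference lists and possibly unmatched vertices, and to make sure unmatched vertices on each side are handled consistently; this is standard.
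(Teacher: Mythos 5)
The paper does not supply a proof of this lemma; it is quoted as a known result from Galvin's paper, with the relevant definitions reproduced so that it can be applied in Lemma~\ref{lem:decomp-sat}. Your proposal therefore cannot be compared against ``the paper's own proof,'' but as a reconstruction of Galvin's argument it is correct and is essentially the standard stable-matching proof. The reduction to the whole graph is sound: an induced subgraph of $L(U)$ on a set $F \subseteq E(U)$ is $L(U')$ for the $(X,Y)$-bigraph $U'$ with edge set $F$, and the orientation and coloring restrict compatibly, so kernel-perfection reduces to the existence of a kernel. Your sign-tracking is also correct: with $X$-vertices preferring larger $\phi$-values and $Y$-vertices preferring smaller, the negation of ``$e=xy$ is a blocking pair'' (using strictness, which holds because $\phi$ is proper) is precisely ``$x$ is matched to some $f$ with $\phi(f)>\phi(e)$, or $y$ is matched to some $f$ with $\phi(f)<\phi(e)$,'' which is exactly ``$e$ has an out-neighbor in $M$.'' One small point worth making explicit: if $e=xy\notin M$ had both endpoints unmatched, $e$ would itself be a blocking pair, so stability rules this case out; this is why the Gale--Shapley variant with incomplete preference lists suffices and no further case analysis is needed.
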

\begin{proof}[Proof of Lemma~\ref{lem:decomp-sat}]
  Define an auxiliary bipartite graph $U$ as follows. Let $X$ and $Y$
  be disjoint copies of $V(G)$, and for each $v \in V(G)$, let $v_x$
  and $v_y$ denote the copies of $v$ in $X$ and $Y$ respectively.
  Define
  \begin{align*}
    V(U) &= X \cup Y, \\
    E(U) &= \{u_xv_y \st (u,v) \in E(J)\}.
  \end{align*}
  Define an edge-coloring $\phi$ of $U$ as follows: for every arc $(u,v) \in J$,
  let $\phi(u_xv_y)$ be the unique index $i$ such that $(u,v) \in E(H_i)$. Observe
  that $\phi$ is a proper edge-coloring of $U$, since for each $i$, every vertex
  has maximum indegree and maximum outdegree at most $1$ in $H_i$. Furthermore,
  for every $u_x \in X$, the colors on edges incident to $u_x$ are precisely
  the colors $\{1, \ldots, d(u_x)\}$, since if $u_x$ lacks an incident edge
  of color $i$, then $\dout_{H_i}(u) = 0$, which implies $\dout_{H_j}(u) = 0$
  for all $j > i$, so that $u_x$ has no incident edge of color $j$ for any
  $j > i$.

  Let $Z$ be the kernel-perfect orientation of $L(U)$ obtained from
  Lemma~\ref{lem:galvin}. We claim that every edge $u_xv_y \in E(U)$
  has degree at most $d_U(u_x)-1$ in $Z$. To see this, let $e$ be an out-neighbor
  of $u_xv_y$ in $Z$, and let $w$ be the common endpoint of $e$ and $u_xv_y$.
  If $w = u_x$, then $\phi(u_xv_y) < \phi(e) \leq d_U(u_x)$, while if $w = v_y$,
  then $1 \leq \phi(e) < \phi(u_xv_y)$. As $\phi$ is a proper edge-coloring,
  this implies that there are at most $d_U(u_x) - 1$ out-neighbors of $u_xv_y$.
  
  By Lemma~\ref{lem:BBS}, it follows that $L(U)$ is $f$-choosable,
  where $f(u_xv_y) = \dl(u_x)$ for all $u_xv_y \in E(U)$. Now we
  show that $G$ is $\dl$-saturable. Let $\lz$ be any list assignment
  on $G$ with $\sizeof{\lz(v)} \leq \dl(v)$ for all $v$. By adding
  extra colors if necessary, we may assume that
  $\sizeof{\lz(v)} = \dl(v)$ for all $v$.  Define a list assignment
  $\lz'$ on $L(U)$ as follows: for each edge $u_xv_y \in E(U)$, let
  $\lz'(u_xv_y) = \lz(u_x)$. Since $L(U)$ is $f$-choosable, there is a
  proper $\lz'$-coloring $\psi$ of $L(U)$.

  Observe that for each vertex $u \in V(G)$, we have
  $d_Z(u_x) = \dl(u)$, and that all edges incident to $u_x$ in $Z$
  have the list $\lz(u)$ with size $\dl(u)$. Thus, for each color
  $c \in \lz(u)$, the vertex $u_x$ is incident to exactly one edge
  $e$ with $\psi(e) = c$.

  Now for any color $c$ used in $\psi$, let $U_c$ be the subgraph of
  $Z$ consisting of the edges of color $c$, and let $J_c$ be the
  spanning subdigraph of $J$ with arc set
  $\{(u,v) \st u_xv_y \in U_c$. Since $\psi$ is a proper edge-coloring
  of $Z$, we see that $U_c$ is a matching, which implies that $J_c$
  has maximum indegree and maximum outdegree at most $1$. Since $J$
  has no odd cycles, every component of $J_c$ is a directed path
  (possibly a $1$-vertex path) or a directed even cycle. Either way,
  there is a matching $M_c$ in the underlying graph of $J_c$ that
  covers every vertex with positive outdegree in $J_c$.
  
  Now we obtain a partial edge-coloring $\xi$ of $G$ by coloring the
  edges in $M_c$ with color $c$, for each color $c$ used in $\psi$.
  Since each $M_c$ is a matching, the partial edge-coloring is clearly
  proper. Furthermore, if $c \in \lz(u)$ for $u \in V(G)$, then $u_x$
  is incident to some edge $e$ with $\psi(e)=c$, so $u$ has positive
  outdegree in $J_c$, and therefore is covered by $M_c$. It follows
  that $\xi$ is $\lz$-saturating. As $\lz$ was arbitrary, $G$ is
  $\dl$-saturable.
\end{proof}
We prove Theorem~\ref{thm:chordal} via the following lemma. A
\emph{simplicial elimination order} is a vertex ordering $\lob$ such
that when the vertices of $G$ are written $v_1, \ldots, v_n$ in order
according to $\lob$, then for each $i$, the neighborhood of $v_i$ in
the graph $G - \{v_1, \ldots, v_{i-1}\}$ is a clique. A graph is
chordal if and only if it has a simplicial elimination
order~\cite{Chordal}.
\begin{lemma}\label{lem:simp}
  Let $\lob$ be a simplicial elimination ordering on an $n$-vertex graph
  $G$. If $J$ is the orientation of $G$ obtained by orienting
  each edge $uv$ with $u < v$ from $v$ to $u$, then $G$ is
  $\dl$-saturable.
\end{lemma}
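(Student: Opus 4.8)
The plan is to prove that the orientation $J$ admits a good decomposition; by Lemma~\ref{lem:decomp-sat}, $G$ is then $\dl$-saturable. (This also establishes Conjecture~\ref{coj:decomp} for chordal graphs.) First I would observe that $J$ is acyclic: every arc points from a $\lob$-later vertex to a $\lob$-earlier one, so along any directed walk the vertices occur in strictly decreasing $\lob$-order; in particular $J$ has no directed odd cycle. Because of this, a good decomposition of $J$ amounts to an assignment of a positive-integer color $\phi(e)$ to each arc $e$ such that, for every vertex $v$, the out-arcs at $v$ receive exactly the colors $\{1,\dots,\dl(v)\}$ while the in-arcs at $v$ receive pairwise distinct colors. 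Indeed, taking $H_i$ to be the spanning subdigraph whose arcs are those colored $i$, the first condition gives the monotonicity $\dout_{H_i}(v)\ge\dout_{H_{i+1}}(v)$, the two conditions together force each $H_i$ to have maximum out-degree and maximum in-degree at most $1$, and acyclicity then makes each component of $H_i$ a directed path.

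I would construct such a coloring greedily, processing the vertices in the order $v_1,\dots,v_n$ given by $\lob$ and, at step $i$, coloring all out-arcs of $v_i$ (each of which goes to a $\lob$-earlier, hence already-processed, vertex). Maintaining the invariant that the colored arcs always form a valid partial coloring, the step-$i$ task is to choose a bijection from the set $N^+(v_i)=\{w_1,\dots,w_q\}$ of $\lob$-earlier neighbors of $v_i$ (listed so that $w_1<\cdots<w_q$, with $q=\dl(v_i)$) onto $\{1,\dots,q\}$ such that the color assigned to $w_a$ avoids the set $F(w_a)$ of colors already used on in-arcs of $w_a$. Here the simplicial order does the work: the set of $\lob$-later neighbors of $w_a$ is a clique containing $v_i$, so any $\lob$-later neighbor of $w_a$ that was processed before $v_i$ --- and hence could have contributed a colored in-arc of $w_a$ --- is also a neighbor of $v_i$ and is $\lob$-earlier than $v_i$; that is, it is $w_b$ for some $b>a$. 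Consequently $\sizeof{F(w_a)}\le\sizeof{\{b>a\st w_aw_b\in E(G)\}}\le q-a$, and for every nonempty $A\subseteq\{1,\dots,q\}$ with largest element $a^*$ we get $\sizeof{\bigcap_{a\in A}F(w_a)}\le\sizeof{F(w_{a^*})}\le q-a^*\le q-\sizeof{A}$. Thus, in the bipartite graph joining each index $a$ to the colors outside $F(w_a)$, Hall's condition holds and the required bijection exists; coloring $v_1$'s (nonexistent) out-arcs is the trivial base case.

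I expect the main obstacle to be exactly this last step: identifying the right forbidden sets $F(w_a)$ and using the clique structure of $\lob$-later neighborhoods to establish the bound $\sizeof{F(w_a)}\le q-a$, which is precisely the estimate that makes Hall's condition go through. The surrounding bookkeeping --- verifying that every previously-colored in-arc of a vertex $w_a\in N^+(v_i)$ really does originate at some $w_b$ with $b>a$ (so that the partial-coloring invariant is genuinely preserved), and checking that $H_i:=\phi^{-1}(i)$ is a good decomposition before applying Lemma~\ref{lem:decomp-sat} --- should then be routine.
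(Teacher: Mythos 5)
Your proof is correct, but it takes a genuinely different route from the paper's. You prove Conjecture~\ref{coj:decomp} for chordal graphs by constructing a good decomposition directly --- reformulating a good decomposition of the acyclic digraph $J$ as an arc-coloring in which each vertex's out-arcs receive exactly the colors $\{1,\ldots,\dl(v)\}$ while its in-arcs receive distinct colors, then producing such a coloring greedily with a Hall-type estimate driven by the clique structure of $\lob$-later neighborhoods --- and then invoke Lemma~\ref{lem:decomp-sat}. The paper's proof of Lemma~\ref{lem:simp} is instead a direct induction on $n$: it removes the $\lob$-minimum vertex $v$, whose (clique) neighborhood $w_1<\cdots<w_t$ satisfies $\dl(w_i)\ge i$, greedily selects distinct colors $c_i\in\lz(w_i)$, deletes each $c_i$ from the corresponding list, applies the inductive hypothesis to $G-v$, and repairs any missing demand $c_i$ at $w_i$ by coloring $vw_i$ with $c_i$. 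Thus the paper reaches $\dl$-saturability directly, never building a good decomposition and never calling Lemma~\ref{lem:decomp-sat}; your route actually delivers what the opening of Section~\ref{sec:chordal} announces (``we prove Conjecture~\ref{coj:decomp} for chordal graphs''), something the paper's own argument for this lemma quietly bypasses. One point to make explicit: the color side of your Hall bipartite graph should be $\{1,\ldots,q\}$ rather than all colors, so that the resulting bijection genuinely lands in $\{1,\ldots,q\}$; your bound $\sizeof{\bigcap_{a\in A}F(w_a)}\le q-\sizeof{A}$ still gives Hall's condition after intersecting with $\{1,\ldots,q\}$. In fact Hall is slightly more than you need here --- a one-pass greedy choice in the order $a=1,\ldots,q$ suffices, since at step $a$ at most $a-1$ colors of $\{1,\ldots,q\}$ are taken while at least $a$ colors of $\{1,\ldots,q\}$ avoid $F(w_a)$.
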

\begin{proof}
  We use induction on $n$. When $n=1$, there is nothing to prove, so
  assume that $n > 1$ and the claim holds for smaller $n$. Let $\lob$ be
  a simplicial elimination order on $G$ and let $\lz$ be a list
  assignment with $\sizeof{\lz(v)} = \dl(v)$ for all $v$.

  Let $v$ be the minimum vertex in $\lob$, and let $w_1 < \ldots < w_t$
  be the neighbors of $v$, written in order according to $\lob$. Because
  $N(v)$ is a clique, we have $\dl(w_i) \geq i$ for each $i \in [t]$,
  since $w_i$ has, as smaller neighbors, at least $v$ and $w_1,
  \ldots, w_{i-1}$. Thus, we may choose distinct colors $c_1, \ldots,
  c_t$ such that $c_i \in \lz(w_i)$ for each $i$.

  Let $G' = G-v$ and let $J' = J-v$. Since the restriction of
  $\lob$ to $G'$ is still a simplicial elimination order, and since $J'$
  is obtained from $\lob$ in the prescribed manner, the
  induction hypothesis says that $G'$ is $\dlp$-saturable, where
  $\dlp(w)$ is the downdegree of $w$ in $J'$. Furthermore, for all
  $w \in V(G')$, we have
  \[ \dlp(w) =
  \begin{cases}
    \dl(w) - 1, & \text{if $w \in N(v)$;} \\
    \dl(w), & \text{otherwise.}
  \end{cases} \]
  Let $\lz'$ be the list assignment on $G'$ given by
  \[ \lz'(w) = 
  \begin{cases}
    \lz(w) - c_i, & \text{if $w = w_i$,} \\
    \lz(w), &\text{if $w \notin N(v)$.}
  \end{cases} \]
  Now $\sizeof{\lz'(w)} = \dlp(w)$ for all $w \in V(G)$, so by the
  induction hypothesis, $G'$ has a partial edge coloring $\psi'$ that
  is $\lz'$-down-saturating.

  The partial coloring $\psi'$ is almost $\lz$-saturating in $G$,
  except that each vertex $w_i$ may fail to have $c_i$ on an incident
  edge. (As $v$ has outdegree $0$ in $J$, it makes no demands of the
  coloring.) We extend $\psi'$ to a partial edge coloring of $G$ by
  coloring each edge $vw_i$ with color $c_i$ if $w_i$ does not yet
  have an incident edge of color $c_i$. Since the colors $c_i$ are
  distinct, the resulting partial edge coloring $\psi$ is still
  proper; thus, $\psi$ is $\lz$-saturating. Since $\lz$ was arbitrary,
  it follows that $G$ is $\dl$-saturable.
\end{proof}
Theorem~\ref{thm:chordal} follows immediately from
Lemma~\ref{lem:simp} and Lemma~\ref{lem:satur}: if $D$ is a
$k$-optimal set in a chordal graph $G$, then the induced subgraph
$G[V(G)-D]$ is again chordal, so Lemma~\ref{lem:simp} implies that
$G$ satisfies the hypothesis of Lemma~\ref{lem:satur}.
\bibliographystyle{amsplain} \bibliography{biblio}

\providecommand{\bysame}{\leavevmode\hbox to3em{\hrulefill}\thinspace}
\providecommand{\MR}{\relax\ifhmode\unskip\space\fi MR }
\providecommand{\MRhref}[2]{%
  \href{http://www.ams.org/mathscinet-getitem?mr=#1}{#2}
}
\providecommand{\href}[2]{#2}
\begin{thebibliography}{10}

\bibitem{Berge}
Claude Berge, \emph{Graphs}, North-Holland Mathematical Library, vol.~6,
  North-Holland Publishing Co., Amsterdam, 1985, Second revised edition of part
  1 of the 1973 English version. \MR{809587 (87e:05050)}

\bibitem{CDMMS}
Guillaume Chapuy, Matt DeVos, Jessica McDonald, Bojan Mohar, and Diego Scheide,
  \emph{Packing triangles in weighted graphs}, SIAM J. Discrete Math.
  \textbf{28} (2014), no.~1, 226--239. \MR{3164555}

\bibitem{DomSurvey}
Mustapha Chellali, Odile Favaron, Adriana Hansberg, and Lutz Volkmann,
  \emph{{$k$}-domination and {$k$}-independence in graphs: a survey}, Graphs
  Combin. \textbf{28} (2012), no.~1, 1--55. \MR{2863534 (2012k:05005)}

\bibitem{ERT}
Paul Erd{\H{o}}s, Arthur~L. Rubin, and Herbert Taylor, \emph{Choosability in
  graphs}, Proceedings of the {W}est {C}oast {C}onference on {C}ombinatorics,
  {G}raph {T}heory and {C}omputing ({H}umboldt {S}tate {U}niv., {A}rcata,
  {C}alif., 1979) (Winnipeg, Man.), Congress. Numer., XXVI, Utilitas Math.,
  1980, pp.~125--157. \MR{593902 (82f:05038)}

\bibitem{Favaron}
Odile Favaron, \emph{On a conjecture of {F}ink and {J}acobson concerning
  {$k$}-domination and {$k$}-dependence}, J. Combin. Theory Ser. B \textbf{39}
  (1985), no.~1, 101--102. \MR{805459 (86k:05064)}

\bibitem{FinkJacobson1}
John~Frederick Fink and Michael~S. Jacobson, \emph{{$n$}-domination in graphs},
  Graph theory with applications to algorithms and computer science
  ({K}alamazoo, {M}ich., 1984), Wiley-Intersci. Publ., Wiley, New York, 1985,
  pp.~283--300. \MR{812671 (87e:05086)}

\bibitem{FinkJacobson2}
\bysame, \emph{On {$n$}-domination, {$n$}-dependence and forbidden subgraphs},
  Graph theory with applications to algorithms and computer science
  ({K}alamazoo, {M}ich., 1984), Wiley-Intersci. Publ., Wiley, New York, 1985,
  pp.~301--311. \MR{812672 (87e:05087)}

\bibitem{Fulkerson}
D.~R. Fulkerson, \emph{The maximum number of disjoint permutations contained in
  a matrix of zeros and ones}, Canad. J. Math. \textbf{16} (1964), 729--735.
  \MR{0168583 (29 \#5843)}

\bibitem{Chordal}
D.~R. Fulkerson and O.~A. Gross, \emph{Incidence matrices and interval graphs},
  Pacific J. Math. \textbf{15} (1965), 835--855. \MR{0186421 (32 \#3881)}

\bibitem{Galvin}
Fred Galvin, \emph{The list chromatic index of a bipartite multigraph}, J.
  Combin. Theory Ser. B \textbf{63} (1995), no.~1, 153--158. \MR{1309363
  (95m:05101)}

\bibitem{HallsTheorem}
Philip Hall, \emph{On representatives of subsets}, J. London Math. Soc
  \textbf{10} (1935), no.~1, 26--30.

\bibitem{Haxell}
P.~E. Haxell, \emph{Packing and covering triangles in graphs}, Discrete Math.
  \textbf{195} (1999), no.~1-3, 251--254. \MR{1663859 (99h:05091)}

\bibitem{HaxellRodl}
P.~E. Haxell and V.~R{\"o}dl, \emph{Integer and fractional packings in dense
  graphs}, Combinatorica \textbf{21} (2001), no.~1, 13--38. \MR{1805712
  (2002m:05157)}

\bibitem{SashaK4}
Penny Haxell, Alexandr Kostochka, and St{\'e}phan Thomass{\'e}, \emph{Packing
  and covering triangles in {$K_4$}-free planar graphs}, Graphs Combin.
  \textbf{28} (2012), no.~5, 653--662. \MR{2964780}

\bibitem{SashaStability}
\bysame, \emph{A stability theorem on fractional covering of triangles by
  edges}, European J. Combin. \textbf{33} (2012), no.~5, 799--806. \MR{2889515}

\bibitem{Krivelevich}
Michael Krivelevich, \emph{On a conjecture of {T}uza about packing and covering
  of triangles}, Discrete Math. \textbf{142} (1995), no.~1-3, 281--286.
  \MR{1341453 (96c:05138)}

\bibitem{Lebensold}
Kenneth Lebensold, \emph{Disjoint matchings of graphs}, J. Combinatorial Theory
  Ser. B \textbf{22} (1977), no.~3, 207--210. \MR{0450138 (56 \#8435)}

\bibitem{Murty}
U.~S.~R. Murty, \emph{An application of {R}ado's theorem to disjoint matchings
  in bipartite graphs}, J. London Math. Soc. (2) \textbf{17} (1978), no.~2,
  193--194. \MR{491057 (80f:05054)}

\bibitem{Ore}
Oystein Ore, \emph{Theory of graphs}, American Mathematical Society Colloquium
  Publications, Vol. XXXVIII, American Mathematical Society, Providence, R.I.,
  1962. \MR{0150753 (27 \#740)}

\bibitem{Puleo}
Gregory~J. Puleo, \emph{Tuza's conjecture for graphs with maximum average
  degree less than 7}, European Journal of Combinatorics \textbf{49} (2015),
  134--152.

\bibitem{LBT}
{\relax Aparna Lakshmanan}~S., {\relax Cs}.~Bujt{\'a}s, and {\relax Zs}.~Tuza,
  \emph{Small edge sets meeting all triangles of a graph}, Graphs Combin.
  \textbf{28} (2012), no.~3, 381--392. \MR{2912661}

\bibitem{LBT-perfect}
\bysame, \emph{Induced cycles in triangle graphs}, 2014.

\bibitem{TuzaProc}
{\relax Zs}olt Tuza, \emph{Finite and infinite sets. {V}ol.\ {I}, {II}},
  Proceedings of the sixth {H}ungarian combinatorial colloquium held in {E}ger,
  {J}uly 6--11, 1981 (Amsterdam) (A.~Hajnal, L.~Lov{\'a}sz, and V.~T. S{\'o}s,
  eds.), Colloquia Mathematica Societatis J\'anos Bolyai, vol.~37,
  North-Holland Publishing Co., 1984, p.~888. \MR{818224 (87a:05005)}

\bibitem{Tuza}
\bysame, \emph{A conjecture on triangles of graphs}, Graphs Combin. \textbf{6}
  (1990), no.~4, 373--380. \MR{1092587 (92j:05108)}

\bibitem{Vizing}
V.~G. Vizing, \emph{On an estimate of the chromatic class of a {$p$}-graph},
  Diskret. Analiz No. \textbf{3} (1964), 25--30. \MR{0180505 (31 \#4740)}

\bibitem{Vizing76}
\bysame, \emph{Coloring the vertices of a graph in prescribed colors}, Diskret.
  Analiz (1976), no.~29 Metody Diskret. Anal. v Teorii Kodov i Shem, 3--10,
  101. \MR{0498216 (58 \#16371)}

\end{thebibliography}
\end{document}